\newtheorem{thm}{Theorem}[section]
\newtheorem{cor}[thm]{Corollary}
\newtheorem{lem}[thm]{Lemma}
\newtheorem{prop}[thm]{Proposition}
\newcommand{\bZ}{\mathbb{Z}}
\begin{document}
%opening
\title{Dirichlet-Ford Domains and Arithmetic Reflection Groups}
\author{Grant S. Lakeland}
\date{}
\maketitle
\begin{abstract}
In this paper, it is shown that a Fuchsian group, acting on the upper half-plane model for $\mathbb{H}^2$, admits a Ford domain which is also a Dirichlet domain, for some center, if and only if it is an index $2$ subgroup of a reflection group. This is used to exhibit an example of a maximal arithmetic hyperbolic reflection group which is not congruence. Analogous results, and counterexamples, are given in the case of Kleinian groups. 
\end{abstract}
\section{Introduction}
The action of the modular group $\mathrm{(P)SL}_2(\bZ)$ on the upper half-plane model for hyperbolic $2$-space $\mathbb{H}^2$ has been extensively studied. It is well-known that a fundamental domain for this action is given by the triangle $T$ with vertices at $\rho=\frac{1}{2}+\frac{\sqrt{-3}}{2}$, $-\overline{\rho}$ and $\infty$. This domain is an example of two common constructions of fundamental domains for Fuchsian groups: it is both a Ford domain and a Dirichlet domain for the action of the modular group. Furthermore, it arises from more than one distinct choice of Dirichlet center, as taking the Dirichlet domain centered at any $z_0 = iy$, for $y>1$, gives rise to $T$. One expects the Dirichlet domain to change along with the choice of center \cite{Diaz}, so in this sense the modular group exhibits some atypical properties.\vspace{2 mm}\\
It is also well-known that $\mathrm{PSL}_2(\bZ)$ is the orientation-preserving index $2$ subgroup of the group generated by reflections in an ideal triangle in $\mathbb{H}^2$ with angles $\frac{\pi}{2}$, $\frac{\pi}{3}$ and $0$, located at $i$, $\rho$ and $\infty$ respectively. More generally, a \emph{hyperbolic reflection group} is a subgroup of $\mathrm{Isom}(\mathbb{H}^2)$ generated by reflections in the sides of a polygon $Q \subset \mathbb{H}^2$. Such a group is discrete if and only if each angle of $Q$ is either $0$ or an integer submultiple of $\pi$.\vspace{2mm}\\
The purpose of this paper is to determine exactly which Fuchsian groups admit a fundamental domain that is both a Dirichlet domain and a Ford domain (which we will call a \emph{DF domain}) or a Dirichlet domain for multiple centers (a \emph{Double Dirichlet domain}). It turns out that the above properties of $\mathrm{PSL}_2(\bZ)$ are very much related; we demonstrate the following result (Theorems \ref{5.3} and \ref{5.4}) regarding such groups:\vspace{2 mm}\\
\textbf{Theorem.} \textit{A finitely generated, finite coarea Fuchsian group $\Gamma$ admits a DF domain (or a Double Dirichlet domain) $P$ if and only if $\Gamma$ is an index $2$ subgroup of the discrete group $G$ of reflections in a hyperbolic polygon $Q$.}\vspace{2 mm}\\
The condition given by this result provides a method of checking whether a given Fuchsian group is the index $2$ orientation-preserving subgroup of a hyperbolic reflection group. This is particularly useful in the context of \emph{maximal arithmetic} reflection groups. A non-cocompact hyperbolic reflection group is \emph{arithmetic} if it is commensurable with $\mathrm{PSL}_2(\bZ)$, and \emph{maximal arithmetic} if it is not properly contained in another arithmetic reflection group. As an application of the above theorem, we give the following result:\vspace{2mm}\\
\textbf{Corollary.} \textit{There exists a maximal arithmetic hyperbolic reflection group which is not congruence.}\vspace{2mm}\\
This answers a question raised by Agol--Belolipetsky--Storm--Whyte in \cite{ABSW} (see also Belolipetsky \cite{Belo}).\vspace{2mm}\\
Any group $\Gamma$ satisfying the theorem must have \emph{genus zero} \cite{LMR}, as well as a certain symmetrical property. Having a Double Dirichlet or DF domain therefore also gives an obstruction to the group having \emph{non-trivial cuspidal cohomology} \cite{GrunSchw2}.\vspace{2 mm}\\
Motivated by this, one may ask whether there exists a similar obstruction to non-trivial cuspidal cohomology for Kleinian groups. We will show that this is not the case: in Section \ref{kleinian} we exhibit a Kleinian group which possesses both non-trivial cuspidal cohomology and a DF domain. However, the condition of having such a domain does impose some restrictions on $\Gamma$; perhaps the most striking is that the group possesses a generating set, all of whose elements have real trace (Theorem \ref{dim3thm}).\vspace{2 mm}\\
This paper is organized as follows. After the preliminaries of Section \ref{prelims}, Section \ref{df} will examine Fuchsian groups with DF domains, and show that such domains are symmetrical and give rise to punctured spheres. The more general case of the Double Dirichlet domain is discussed in Section \ref{dd}. In Section \ref{refgps}, it will be shown that the main theorem follows from the previous sections and standard results on reflection groups. An example of a non-congruence maximal arithmetic hyperbolic reflection group can be found in Section \ref{example}. Section \ref{kleinian} contains a discussion of these domains in the setting of Kleinian groups.\vspace{2 mm}\\
\textbf{Acknowledgments.} The author thanks his advisor, Alan Reid, for his guidance and encouragement, and Daniel Allcock and Hossein Namazi, for additional helpful conversation and correspondence.

\section{Preliminaries}\label{prelims}

We will work in the upper half-plane model for hyperbolic space. The group of conformal, orientation-preserving isometries (or linear fractional transformations) of $\mathbb{H}^2$ can be identified with $\mathrm{PSL}_2(\mathbb{R})$ via the correspondence
$$\begin{pmatrix} a & b \\ c & d\end{pmatrix}  \\ \ \ \  \longleftrightarrow \ \ \ z \longmapsto \frac{az+b}{cz+d}.$$
A \emph{Fuchsian group} $\Gamma$ is a subgroup of $\mathrm{PSL}_2(\mathbb{R})$, discrete with respect to the topology induced by regarding that group as a subset of $\mathbb{R}^4$. The \emph{Dirichlet domain} for $\Gamma$ centered at $z_0$ is defined to be 
$$\lbrace x \in \mathbb{H}^2 \ | \ d(x,z_0) \leq d(x, \alpha(z_0)) \ \forall \ 1 \neq \alpha \in \Gamma \rbrace.$$
It is an intersection of closed half-spaces.\vspace{2 mm}\\
Beardon (\cite{Beardon}, Section 9.5) demonstrates an alternative definition, in terms of reflections, which allows us to define a \emph{generalized Dirichlet domain} by taking our center to be on the boundary $\partial \mathbb{H}^2$. We will typically conjugate $\Gamma$ in $\mathrm{PSL}_2(\mathbb{R})$ so that this center is placed at $\infty$ in the upper half-plane. We suppose $\Gamma$ is \emph{zonal}, or that $\infty$ is a parabolic fixed point, and so the reflections given are not uniquely determined for any parabolic isometry fixing $\infty$. To account for this, we define a \emph{Ford domain} \cite{Ford} to be the intersection of the region exterior to all isometric circles with a fundamental domain for the action of the parabolic subgroup stabilizing $\infty$, $\Gamma_\infty < \Gamma$, which is a vertical strip.\vspace{2 mm}\\
For a given finitely generated Fuchsian group $\Gamma$, the \emph{signature} $(g; n_1, \ldots, n_t; m; f)$ of $\Gamma$ records the topology of the quotient space $\mathbb{H}^2 / \Gamma$, where $g$ is the genus, $t$ is the number of cone points of orders $n_1, \ldots, n_t$ respectively, $m$ is the number of cusps, and $f$ is the number of infinite area funnels. If $\Gamma$ is the orientation-preserving index $2$ subgroup of a reflection group, then $\mathbb{H}^2 / \Gamma$ is a sphere with cusps and/or cone points, and thus in this case we have $g=0$. If additionally $\Gamma$ has finite coarea, then we also have that $f=0$.\vspace{2mm}\\
The group of orientation-preserving isometries of the upper half-space model of $\mathbb{H}^3$ can likewise be identified with $\mathrm{PSL}_2(\mathbb{C})$. A \emph{Kleinian group} is a discrete subgroup of this isometry group. The definitions of Dirichlet domain and Ford domain carry over to this situation, with one small modification: instead of $\Gamma$ being zonal, we assume that $\Gamma_\infty$ contains a copy of $\mathbb{Z}^2$.\vspace{2 mm}\\
Throughout, we will assume that $\Gamma$ is finitely generated, and hence that all fundamental domains we encounter have a finite number of sides. For simplicity, we will also suppose that $f=0$ and that $\Gamma$ has finite covolume (and thus that all fundamental domains have finite volume; that is, finitely many ideal vertices, each adjacent to two sides), although many of the arguments should extend to the case where $\Gamma$ does not have finite covolume.

\section{DF Domains}\label{df}

Suppose $\Gamma$ contains a non-trivial parabolic subgroup $\Gamma_\infty$ fixing $\infty$. In $\mathbb{H}^2$, $\Gamma_\infty$ must be cyclic, and after conjugation, we may take it to be generated by
$$T = \begin{pmatrix} 1 & 1 \\ 0 & 1 \end{pmatrix}.$$

\begin{thm}\label{PS} If $\Gamma$ admits a DF domain, then the quotient space $\mathbb{H}^2 / \Gamma$ is a punctured sphere, possibly with cone points.\end{thm}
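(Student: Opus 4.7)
The plan is to show that the DF domain $P$ admits a reflective symmetry about a vertical geodesic $L$, use this to realize $\Gamma$ as an index $2$ subgroup of a discrete reflection group $G$, and then conclude via a doubling argument that $\mathbb{H}^2/\Gamma$ is a punctured sphere (possibly with cone points). The point is that $P$ has two Dirichlet descriptions at once: as a Dirichlet domain centered at a finite point $z_0$, and (modulo $\Gamma_\infty$) as a generalized Dirichlet domain centered at $\infty$ via the Ford description. Having two centers forces a symmetry relating them.

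First I would locate the axis. The Ford strip is vertical of width one, its two vertical sides being paired by $T$. For these to be Dirichlet sides centered at $z_0$, they must be the perpendicular bisectors of $[z_0,z_0\pm 1]$, forcing $z_0$ to lie on the midline $L$ of the strip. For each non-vertical side $s$ of $P$, paired by some $\alpha\in\Gamma$, the two descriptions of $s$ give two identities for the reflection $R_s$ in $s$: as an isometric circle with Euclidean center $\alpha^{-1}(\infty)$, one has $R_s(\infty)=\alpha^{-1}(\infty)$; as the perpendicular bisector of $[z_0,\alpha^{-1}(z_0)]$, one has $R_s(z_0)=\alpha^{-1}(z_0)$. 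Therefore $\alpha\circ R_s$ is an orientation-reversing isometry fixing both $\infty$ and $z_0$, and so must be the reflection $R_L$ in $L$. Thus $\alpha=R_L\circ R_s$ for every side pairing; an analogous computation handles the vertical sides, where $T=R_L\circ R_{s_0}$ for $s_0$ a vertical side of the strip.

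The identity $\alpha(s)=R_L R_s(s)=R_L(s)$ shows that $R_L$ permutes the sides of $P$, and since $R_L$ preserves the strip and $P$ is convex, $R_L(P)=P$. Then $G:=\langle\Gamma,R_L\rangle$ contains $\Gamma$ as an index $2$ subgroup (since $R_L$ is orientation-reversing), and each $R_s=R_L\cdot\alpha$ lies in $G$, so $G$ is a discrete reflection group, generated by $R_L$ together with the $R_s$ for sides on one side of $L$. Its fundamental polygon is the half $P'=P\cap\{\mathrm{Re}(z)\geq\mathrm{Re}(z_0)\}$, all of whose boundary is mirror. The quotient $\mathbb{H}^2/\Gamma$, being the orientation double cover of the orbifold $\mathbb{H}^2/G=P'$, is obtained by doubling this topological disk along its entire boundary: a topological sphere, with cusps at ideal vertices and cone points at corner reflectors. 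Since $\infty$ is an ideal vertex of $P'$, at least one puncture is present, proving the claim.

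The most delicate points are the careful bookkeeping of Ford versus Dirichlet pairing conventions needed to establish the identity $\alpha=R_L\circ R_s$ (and in particular the matching of $I_\alpha$ as a Ford side with $s_{\alpha^{-1}}$ as a Dirichlet side), and the verification that $R_L$ genuinely preserves $P$ as a set, not merely that it permutes the collection of sides.
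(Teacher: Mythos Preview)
Your argument is correct and takes a genuinely different route from the paper's. The paper proceeds pointwise: it proves two preparatory lemmas (that Ford side-pairings preserve imaginary parts, and that Dirichlet side-pairings preserve distance to the center $z_0$), applies both to each vertex $v$ of $P$ to deduce that $\gamma(v)$ lies in the intersection of a horocycle and a hyperbolic circle, hence equals either $v$ or its reflection $v^*$ in the midline, and then argues directly from this symmetric side-pairing pattern that the quotient has genus zero. You instead extract the reflection symmetry in one stroke: since $s$ is simultaneously the isometric circle of the pairing element $\alpha$ and the perpendicular bisector of $[z_0,\alpha^{-1}(z_0)]$, the reflection $R_s$ sends $\infty\mapsto\alpha^{-1}(\infty)$ and $z_0\mapsto\alpha^{-1}(z_0)$, so $\alpha R_s$ is an orientation-reversing isometry fixing both $\infty$ and $z_0$, forcing $\alpha=R_L R_s$. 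This immediately realizes $\Gamma$ as the orientation-preserving index~$2$ subgroup of a discrete reflection group (since $R_L$ conjugates each generator $\alpha$ to $\alpha^{-1}$, so $R_L$ normalizes $\Gamma$), and the sphere conclusion follows by doubling the disk $P'$.

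In effect you prove the paper's Theorem~5.3 first and deduce Theorem~3.1 as a corollary, whereas the paper establishes the symmetry of $P$ more laboriously in Section~3 and only assembles the reflection-group statement in Section~5. Your approach is cleaner and more conceptual; the paper's is more hands-on and avoids any appeal to orbifold doubling. Two small points worth tightening: the worry you flag about matching Ford and Dirichlet pairings is not actually an issue, since the side-pairing element for a given side of a fundamental domain is determined by $\Gamma$ and $P$ alone, independent of how $P$ was constructed; and the claim $R_L(P)=P$ is secured by noting that $R_L$ fixes both $z_0$ and $\infty$, hence maps each defining half-plane of $P$ (the side containing $z_0$, equivalently $\infty$) to another such half-plane.
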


Before commencing the proof of this, we will prove two elementary but important lemmas.

\begin{lem}\label{Lemma1} \emph{(see \cite{Beardon}, Section 9.6.)} Any vertex cycle on the boundary of a Ford domain $P$ is contained within a horocycle based at $\infty$.\end{lem}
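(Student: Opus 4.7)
The plan is to exploit the classical identity $\mathrm{Im}(\alpha(z)) = \mathrm{Im}(z)/|cz+d|^2$ for $\alpha = \begin{pmatrix} a & b \\ c & d \end{pmatrix} \in \mathrm{PSL}_2(\mathbb{R})$, because horocycles based at $\infty$ are precisely the horizontal lines $\{\mathrm{Im}(z) = h\}$, so the lemma is equivalent to showing that the Euclidean height is constant along a vertex cycle.

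First I would recall that the sides of the Ford domain $P$ fall into two types: vertical segments, which come from the boundary of the chosen fundamental strip for $\Gamma_\infty = \langle T \rangle$, and arcs of isometric circles $I(\alpha) = \{z : |cz+d|=1\}$ for those $\alpha \in \Gamma$ with $c \neq 0$. Accordingly, the side-pairing transformations of $P$ are of two kinds. The vertical sides are paired by the parabolic generator $T$, which is a horizontal translation and hence preserves $\mathrm{Im}(z)$ tautologically. The circular sides are paired by elements $\alpha$ for which the relevant side lies on $I(\alpha)$, and $\alpha$ sends it to a side lying on $I(\alpha^{-1})$. On the isometric circle $I(\alpha)$ one has $|cz+d|=1$, and substituting into the formula above gives $\mathrm{Im}(\alpha(z)) = \mathrm{Im}(z)$; that is, $\alpha$ preserves Euclidean height pointwise on its isometric circle.

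Now let $v$ be a finite vertex of $P$ and let $v = v_0, v_1, \ldots, v_k = v$ be its cycle, where consecutive vertices are related by a side-pairing transformation $\alpha_i$ with $\alpha_i(v_{i-1}) = v_i$. Each $v_{i-1}$ lies on the side being paired by $\alpha_i$, hence either on a vertical line fixed by the translation or on the isometric circle $I(\alpha_i)$. By the observation of the previous paragraph, $\mathrm{Im}(v_i) = \mathrm{Im}(v_{i-1})$ in either case, so by induction every vertex in the cycle has the same imaginary part and therefore lies on a single horizontal line, which is a horocycle based at $\infty$. For an ideal vertex on $\mathbb{R}$ the same argument, applied on $\partial \mathbb{H}^2$, keeps the cycle on the boundary horocycle at height $0$.

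The step that requires the most care, and is really the only point of substance, is the identification of which side-pairing element acts on which vertex: one must verify that both sides meeting at $v$ (and producing the next element of the cycle when followed) are indeed arcs of isometric circles containing $v$, or vertical lines through $v$, so that the height-preservation property applies without exception. Once this bookkeeping is in place the conclusion is immediate from the two-line calculation $\mathrm{Im}(\alpha(z)) = \mathrm{Im}(z)/|cz+d|^2 = \mathrm{Im}(z)$ on $I(\alpha)$.
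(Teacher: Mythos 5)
Your proof is correct and follows essentially the same route as the paper: the key fact in both is that an element of $\Gamma$ preserves Euclidean height on its isometric circle (you derive it from $\mathrm{Im}(\alpha(z))=\mathrm{Im}(z)/|cz+d|^2$, the paper from writing $\alpha$ as reflection in $S_\alpha$ followed by reflection in a vertical line). The only organizational difference is that the paper disposes of the bookkeeping you flag at the end by observing that any $\gamma$ with $v\notin S_\gamma$ sends $v$ strictly inside $S_{\gamma^{-1}}$, so $\gamma(v)$ cannot be a vertex of $P$ at all; this handles arbitrary elements of $\Gamma$ rather than only the side-pairing chain.
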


\begin{proof} Fix a vertex $v$. By construction of $P$, $v$ lies on or exterior to all isometric circles, and necessarily lies on at least one. We first consider a $\gamma \in \Gamma$ such that $v \notin S_\gamma$. Then $v$ lies exterior to $S_\gamma$. It follows that $\gamma$ sends $v$ into the interior of $S_{\gamma^{-1}}$. Thus $\gamma(v)$ cannot be a vertex of $P$. Now suppose that $v \in S_\gamma$. Then $\gamma$ is the composition of reflection in $S_\gamma$, which fixes $v$, and reflection in a vertical line. It therefore necessarily preserves the imaginary part of $v$, proving the lemma. \end{proof}

{\bf Remark.} The lemma holds for any point on the boundary of the Ford domain $P$. For our purposes, it will be enough to have it for the vertices of $P$.

\begin{lem}\label{Lemma2} \emph{(see \cite{Greenberg}, p. 203.)} Let $P$ be a Dirichlet domain for $\Gamma$ with center $z_0$. Let $1 \neq \gamma \in \Gamma$ and suppose that $z$, $\gamma(z) \in \partial P \cap \mathbb{H}^2$. Then $d_\mathbb{H}(z,z_0) = d_\mathbb{H}(\gamma(z),z_0)$.\end{lem}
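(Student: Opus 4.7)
The plan is to unpack the definition of the Dirichlet domain and apply it twice, once for each of the hypotheses $z\in\partial P$ and $\gamma(z)\in\partial P$, using that $\gamma$ is an isometry of $\mathbb{H}^2$.

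Since $P$ is the (closed) Dirichlet domain centered at $z_0$, every point $x\in P$, and in particular every boundary point, satisfies
\[
d_\mathbb{H}(x,z_0) \leq d_\mathbb{H}(x,\alpha(z_0)) \quad \text{for all } 1\neq \alpha\in \Gamma.
\]
First I would apply this to $x=z$ with the specific choice $\alpha=\gamma^{-1}$, which is allowed since $\gamma\neq 1$. This yields $d_\mathbb{H}(z,z_0)\leq d_\mathbb{H}(z,\gamma^{-1}(z_0))$, and because $\gamma$ acts as an isometry, $d_\mathbb{H}(z,\gamma^{-1}(z_0)) = d_\mathbb{H}(\gamma(z),z_0)$. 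Hence
\[
d_\mathbb{H}(z,z_0) \leq d_\mathbb{H}(\gamma(z),z_0).
\]

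Next I would apply the defining inequality at $x=\gamma(z)$ with the choice $\alpha=\gamma$, giving $d_\mathbb{H}(\gamma(z),z_0)\leq d_\mathbb{H}(\gamma(z),\gamma(z_0)) = d_\mathbb{H}(z,z_0)$ by the same isometry property. Combining the two inequalities yields the claimed equality.

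There is really no main obstacle here; the only thing to be careful about is making sure the defining inequality is available at boundary points (it is, because $P$ is closed so $\partial P \cap \mathbb{H}^2\subset P$) and that the elements $\gamma^{\pm 1}$ we plug in for $\alpha$ are nontrivial, which follows from the hypothesis $\gamma\neq 1$. The lemma and its proof are the Dirichlet analogue of the observation in Lemma \ref{Lemma1} that Ford side-pairings preserve the horocyclic height of vertices, and it will be the workhorse for transferring geometric information between paired points on $\partial P$ in the sequel.
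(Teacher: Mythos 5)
Your proof is correct and is essentially identical to the paper's: both apply the defining inequality of the Dirichlet domain at $x=z$ with $\alpha=\gamma^{-1}$ and at $x=\gamma(z)$ with $\alpha=\gamma$, then use that $\gamma$ is an isometry to convert each right-hand side and combine the two inequalities. Your added remark about boundary points lying in the closed domain $P$ is a sensible (if routine) point of care that the paper leaves implicit.
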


\begin{proof} This is an application of the definition of a Dirichlet domain stated above. Specifically, setting $x = z$ and $\alpha = \gamma^{-1}$ yields the inequality
$$d(z,z_0) \leq d(z,\gamma^{-1}(z_0)) = d(\gamma(z),z_0),$$
the latter equality holding because $\gamma$ is an isometry. Setting $x = \gamma(z)$ and $\alpha = \gamma$ now gives
$$d(\gamma(z),z_0) \leq d(\gamma(z),\gamma(z_0)) = d(z,z_0).$$
Combining these two inequalities gives the required equality. \end{proof}

We will now use these two lemmas to prove Theorem 3.1.

\begin{proof} Suppose we are given a DF domain $P$ for $\Gamma$. Since $P$ is a Ford domain, it is contained in a fundamental region for $\Gamma_\infty$, which is a vertical strip 
$$\lbrace z \in \mathbb{H}^2 \ | \ x_0 \leq \text{Re}(z) \leq x_0 + 1 \rbrace$$
for some $x_0 \in \mathbb{R}$. Shimizu's Lemma (see \cite{Maskit}, p. 18) tells us that the radii of the isometric circles $S_\gamma$ cannot exceed $1$. Thus we may consider a point $z = x_0 + iy \in \partial P$, where $y>1$. Choosing $\gamma=T$, and applying Lemma \ref{Lemma2} to $z$ and $\gamma(z)$, we find that $\text{Re}(z_0) = x_0 + \frac{1}{2}$.\vspace{2 mm}\\
Next suppose that $v \in \mathbb{H}^2$ is a vertex of $P$, and $\gamma \in \Gamma$ a side-pairing such that $\gamma(v)$ is another vertex of $P$. Then, by Lemma \ref{Lemma1}, $\text{Im}(\gamma(v)) = \text{Im}(v)$, and by Lemma \ref{Lemma2}, $d_\mathbb{H}(\gamma(v),z_0) = d_\mathbb{H}(v,z_0)$. Consider the two sets $\{z \in \mathbb{H}^2 \ | \ \text{Im}(z) = \text{Im}(v) \}$ and $\{z \in \mathbb{H}^2 \ | \ d_\mathbb{H}(z,z_0) = d_\mathbb{H}(v,z_0) \}$. The former is the horizontal line through $v$, and the latter a circle with Euclidean center located vertically above $z_0$ (see Figure \ref{figure:figure03}). In particular, the picture is symmetrical in the vertical line $\{ \text{Re}(z) = x_0 + \frac{1}{2} \}$. Either $\gamma(v) = v$ or $\gamma(v) = v^\ast$, where $v^\ast$ is the reflection of $v$ in the line $ \{ \text{Re}(z) = x_0 + \frac{1}{2} \}$.\vspace{2 mm}\\
Suppose that $\gamma(v) = v$. Then, by considering a point $w \in \partial P$ close to $v$, the fact that $d(w,z_0) = d(\gamma(w),z_0)$ means that $v$ necessarily lies directly below the Dirichlet center $z_0$. The contrapositive of this states that if $\text{Re}(v) \neq x_0 + \frac{1}{2}$, then any side-pairing $\gamma$ pairing $v$ with a vertex of $P$ must send $v$ to $v^\ast$.
\begin{figure}[htbp]
\begin{center}
 
\input{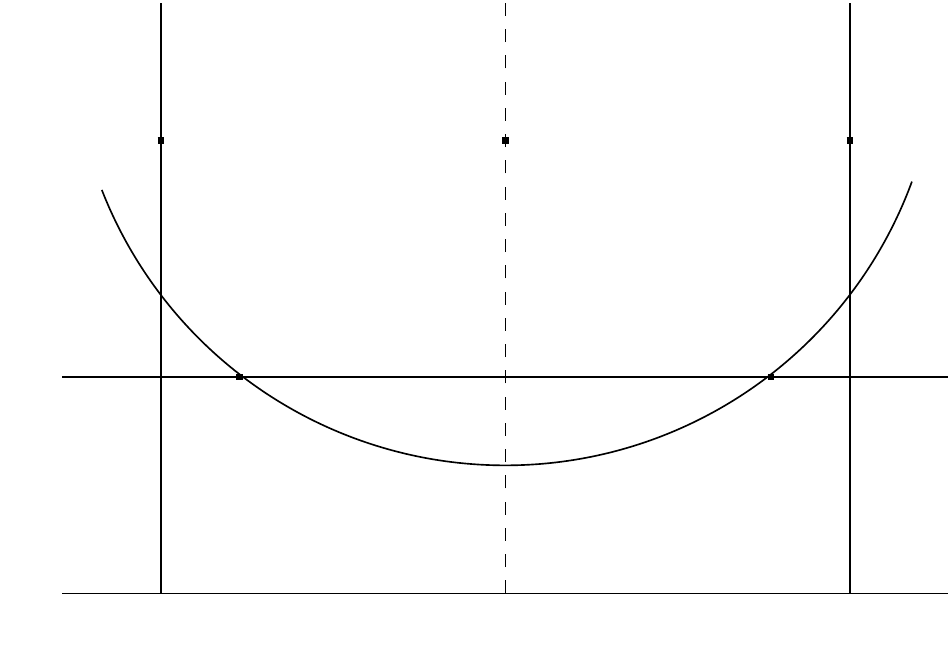_t}
 
\caption{$\gamma(v)=v^*$}
\label{figure:figure03}
\end{center}
\end{figure}
\vspace{2 mm}\\
Suppose now that $v \in \partial \mathbb{H}^2$ is a vertex of P. Then two isometric circles meet at $v$. Fix one such circle $S$. $S$ is the isometric circle $S_\gamma$ of some element $\gamma \in \Gamma$. $S_\gamma$ contains a side of $P$ adjacent to $v$, and we pick two points of $S_\gamma$, $w_1$, $w_2 \in \partial P \cap \mathbb{H}^2$. By Lemma \ref{Lemma2}, $\gamma$ must send both $w_1$ and $w_2$ to points the same respective distances from $z_0$. For each $i$, $w_i$ is either fixed or sent to its reflection in the line $\{ \text{Re}(z) = \text{Re}(z_0) \}$. If $w_1$ were fixed, $w_2$ would neither be fixed nor sent to its reflection, and vice-versa if $w_2$ were fixed. Thus we conclude that $\gamma$ sends points of $S$ to their reflections in the line $\{ \text{Re}(z) = \text{Re}(z_0) \}$.\vspace{2 mm}\\
We can now show that $\mathbb{H}^2 / \Gamma$ is a punctured sphere. We first identify the two vertical sides of $P$, creating the cusp at $\infty$ and a circle awaiting identification. Consider the side of $P$ adjacent to the side contained in the vertical line $\text{Re}(z) = x_0$. This side lies on some isometric circle $S_\gamma$. We see that $\gamma$ must identify our side with a side adjacent to the side of $P$ contained in the line $\text{Re}(z) = x_0 + 1$. Working inwards toward the center and applying this argument repeatedly, we see that all sides must pair up symmetrically. In particular, there can not exist two hyperbolic generators whose axes intersect precisely once. Thus we conclude that the quotient space has genus zero.\end{proof}

{\bf Remarks. (1)} We may take the Dirichlet center of $P$ to be any point of the interior of $P$ on this vertical line $\lbrace \mathrm{Re}(z)=x_0+\frac{1}{2} \rbrace$. To see this, let $z_0$ be any such point, and $\gamma \in \Gamma \ \backslash \ \Gamma_\infty$ a side-pairing of $P$. Since $\gamma(S_\gamma) = S_{\gamma^{-1}}$, and this pair are arranged symmetrically with respect to the line $\lbrace \mathrm{Re}(z)=x_0+\frac{1}{2} \rbrace$, both of these isometric circles are geodesics of the form used to construct the Dirichlet polygon centered at $z_0$.

{\bf (2)} The converse of Theorem \ref{PS} is false. The symmetrical nature of $P$ implies a certain symmetry in the quotient space $\mathbb{H}^2 / \Gamma$, namely that the surface admits an orientation-reversing involution of order 2. This is not something we see in a generic punctured sphere.

\section{Double Dirichlet Domains}\label{dd}

We now suppose that the same fundamental domain $P$ is obtained when we construct the Dirichlet domains $P_0$ and $P_1$ centered at $z_0$ and $z_1 \in \mathbb{H}^2$ respectively. For comparison with the previous section, we will assume that we have conjugated $\Gamma$ in $\text{PSL}_2(\mathbb{R})$ so that the geodesic line $L$ containing $z_0$ and $z_1$ is vertical.

\begin{thm}\label{DD} If the Dirichlet domains $P_0$ and $P_1$ for $\Gamma$, centered at $z_0 \neq z_1 \in \mathbb{H}^2$ respectively, coincide, then the quotient space $\mathbb{H}^2 / \Gamma$ is a sphere, with cone points and/or punctures.\end{thm}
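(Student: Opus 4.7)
The plan is to mirror the structure of the proof of Theorem \ref{PS}, now using both Dirichlet centers $z_0$ and $z_1$ simultaneously in place of the Ford and Dirichlet conditions.

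First, for any side-pairing $\gamma \in \Gamma$ and any $z \in \partial P$ with $\gamma(z) \in \partial P$, Lemma \ref{Lemma2} applied to each of $z_0$ and $z_1$ yields
$$d(z, z_0) = d(\gamma(z), z_0), \qquad d(z, z_1) = d(\gamma(z), z_1).$$
Hence $\gamma(z)$ lies on the intersection of two hyperbolic circles, one centered at each $z_i$ and passing through $z$. Since $z_0 \neq z_1$ these two circles are distinct, so they meet in at most two points.

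Next, the reflection $R$ in the vertical line $L$ fixes both $z_0$ and $z_1$, and hence preserves each circle; this reflection must therefore permute the intersection points. Since one such point is $z$ itself, the other is $z^{\ast} := R(z)$, and so $\gamma(z) \in \{ z, z^{\ast} \}$ for every such $z$. Applying this pointwise along a side $s$ of $P$, and using that $\gamma$ is a nontrivial orientation-preserving isometry (so it cannot coincide with the identity, nor with the orientation-reversing $R$, on a nontrivial geodesic arc), a continuity argument forces $\gamma(s) = s^{\ast} = R(s)$, and $s^{\ast}$ must itself be a side of $P$. Thus $P$ is symmetric under $R$, and its side-pairings respect this symmetry.

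The final step then reruns the argument from the last paragraph of the proof of Theorem \ref{PS}: since every side of $P$ is paired with its reflection in $L$, there cannot exist two hyperbolic generators whose axes intersect precisely once, so $\mathbb{H}^2 / \Gamma$ has genus zero. Combined with the standing assumptions $f=0$ and finite covolume, this yields the desired sphere with cusps and/or cone points.

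The main obstacle I anticipate is handling the degenerate and boundary cases in the second step: points of $\partial P$ lying on $L$ itself (where the two circles become tangent, but since $z = z^{\ast}$ there this causes no real issue) and, more substantively, ideal vertices of $P$ on $\partial \mathbb{H}^2$, where Lemma \ref{Lemma2} does not apply directly. As in Theorem \ref{PS}, these will need to be resolved by a limit argument, observing that a side ending at an ideal vertex must still be paired with its reflection, so no asymmetric ideal vertex can survive.
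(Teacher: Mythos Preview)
Your approach is essentially the paper's: both apply Lemma~\ref{Lemma2} at each center to trap $\gamma(z)$ in the two-point set $\{z, z^\ast\}$, deduce the symmetric side-pairing, and then reuse the genus-zero argument from Theorem~\ref{PS}. The paper inserts an extra preliminary step (Lemma~\ref{Match}) showing that the side-pairings determined by the two centers agree; your argument legitimately bypasses this, since Lemma~\ref{Lemma2} applies to any $\gamma\in\Gamma$ with $z,\gamma(z)\in\partial P$ irrespective of which center's pairing $\gamma$ arose from.

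One small slip to fix: in your parenthetical you say $\gamma$ ``cannot coincide with \ldots\ the orientation-reversing $R$ on a nontrivial geodesic arc,'' but that is exactly the conclusion you want, namely $\gamma|_s = R|_s$. What you actually need to exclude is only the branch $\gamma|_s = \mathrm{id}|_s$, which (since an orientation-preserving isometry fixing a geodesic arc pointwise is the identity) would force $\gamma = 1$; the continuity argument then leaves $\gamma(s) = s^\ast$ as the sole option.
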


\begin{proof} Much of the work in Section $3$ was concerned with showing precisely how the sides of $P$ were identified. This follows relatively swiftly here, once we have cleared up one technical point. We often think of a fundamental domain as a subset of $\mathbb{H}^2$ combined with a set of side-pairings identifying its sides. We only assume that the sets $P_0$ and $P_1$ are equal, and thus we must make sure that $\Gamma$ identifies their sides the same way each time.

\begin{lem}\label{Match} If $P = P_0 = P_1$ is the Dirichlet domain centered at $z_0$ and at $z_1$, then the sides of $P$ are identified the same way in each case.\end{lem}

\begin{proof} Suppose, for the sake of contradiction, that this is not the case. Any side of a Dirichlet domain bisects the domain's center and its image under some isometry. Here, we have a side $A$ of $P$ which is the bisector of both the pair $z_0$ and $\gamma_0^{-1}(z_0)$ and the pair $z_1$ and $\gamma_1^{-1}(z_1)$, where $\gamma_0 \neq \gamma_1$ are the isometries defining that side of $P$. It follows that $\gamma_0$ pairs $A$ with some side $B$, and $\gamma_1$ pairs $A$ with some other side $C \neq B$. Let $d := d(z_0,z_1)$ be the distance between the two centers $z_0$ and $z_1$. Since $\gamma_0^{-1}(z_0)$ and $\gamma_1^{-1}(z_1)$ are the reflections of each in $A$, we see that
$$d(\gamma_0^{-1}(z_0),\gamma_1^{-1}(z_1)) = d.$$
Applying the isometry $\gamma_1$ to both points, this gives that
$$d(\gamma_1(\gamma_0^{-1}(z_0)),z_1) = d.$$
Now, if $\gamma_1(\gamma_0^{-1}(z_0)) = z_0$, then the isometries $\gamma_0$ and $\gamma_1$ both send $\gamma_0^{-1}(z_0)$ to $z_0$ and $\gamma_1^{-1}(z_1)$ to $z_1$. Since they also both preserve orientation, this implies that $\gamma_0 = \gamma_1$, a contradiction. Thus $\gamma_1(\gamma_0^{-1}(z_0)) \neq z_0$. But then $\gamma_1(\gamma_0^{-1}(z_0))$ is a point in the orbit of $z_0$, and thus the construction of $P_0$ involves the half-space $\{ x \in \mathbb{H}^2 \ | \ d(x,z_0) \leq d(x,\gamma_1(\gamma_0^{-1}(z_0))) \}$. As we saw above,
$$d(\gamma_1(\gamma_0^{-1}(z_0)),z_1) = d(z_0,z_1) = d.$$
Hence $z_1$ is equidistant from $z_0$ and $\gamma_1(\gamma_0^{-1}(z_0))$. Thus $z_1$ cannot be in the interior of $P_0$, contradicting the assumption that $P_0 = P_1$.\end{proof}

The following result will allow us to conclude the proof of Theorem \ref{DD}.

\begin{lem}\label{Glue} Each side of $P$ (and each point of $\partial P$) is identified with its reflection in the line $L$.\end{lem}

\begin{proof} Given a point $v \in \partial P$, $v$ is sent to a point of $\partial P$ the same distance away from $z_0$. Put another way, $v$ is sent somewhere on the hyperbolic circle of center $z_0$ and radius $d(v,z_0)$. $v$ is also sent to a point on the hyperbolic circle of center $z_1$ and radius $d(v,z_1)$. Thus we see a picture similar to Figure \ref{figure:figure03}, except instead of a horizontal line, we have a second circle, centered vertically above or below $z_0$. These two circles intersect only at $v$ if $v \in L$, and at $v$ and $v^*$, the reflection of $v$ in $L$, if $v \notin L$. If $v \in L$ then $v$ is necessarily an elliptic fixed point and a vertex of $P$, and the two sides adjacent to $v$ are identified with one another. If $v \notin L$, it suffices to show that $v$ cannot be fixed by a side-pairing, and thus must be identified with $v^*$. If $v$ is a vertex of $P$ this follows by considering a sequence of points on a fixed side $A$ adjacent to $v$.\end{proof}

So we now know that our domain $P$ has the same symmetrical property that we saw DF domains possess. If the line $L \cap \textit{\r{P}}$ extends vertically to $\infty$, then the argument from the proof of Theorem \ref{PS} applies directly, and we are done. If the line terminates at a boundary point of $P$, then we observe that the two sides adjacent to this vertex are identified symmetrically, creating a cone point instead of a cusp. This creates a boundary circle as in the proof of Theorem \ref{PS}, and the rest of the argument applies from there.\end{proof}

\textbf{Remarks. (1)} The first remark at the end of Section $3$ applies here as well. That is, if we take any point $z \in L \cap \textit{\r{P}}$ as our Dirichlet center, we will obtain the Dirichlet domain $P$. Thus we see that a Fuchsian group which admits a DF domain is simply one which admits Dirichlet domain with a line of centers and a cusp on the line of symmetry.

\textbf{(2)} The same discussion can also be used to show that these are the only Dirichlet centers giving rise to $P$. The Dirichlet center must be equidistant from a point of $\partial P$ and its destination under its side-pairing; in this set-up, the locus of such points is always precisely $L$. Thus, it is impossible to find a Fuchsian group with a triangle of Dirichlet centers all giving rise to the same domain.

\section{Reflection Groups}\label{refgps}

The goal of this section is to prove the main theorem. As a corollary, we will show that given the signature of any sphere which can be obtained as a quotient of $\mathbb{H}^2$, then we may exhibit a Fuchsian group $\Gamma$ which admits a Double Dirichlet domain (and a DF domain if there is at least one puncture) and gives rise to a quotient space of the given signature.\vspace{2 mm}\\
We first recall the following results regarding reflection groups (see \cite{Ratcliffe}, Section 7.1).

\begin{thm}\label{5.1} Let $G$ be a discrete reflection group with respect to the polygon $Q$. Then all the angles of $Q$ are submultiples of $\pi$, and if $g_S$ and $g_T$ are reflections in the adjacent sides $S$ and $T$ of $Q$ with $\theta(S,T) = \frac{\pi}{k}$, then $g_S \circ g_T$ has order $k$.\end{thm}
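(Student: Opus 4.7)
The plan is to reduce everything to the standard fact that a composition of two reflections in geodesics meeting at a point $v$ with angle $\theta$ is a rotation about $v$ by angle $2\theta$ (and a parabolic if they meet at $\partial\mathbb{H}^2$). First I would establish this by explicit computation in an appropriate model, or by the symmetry argument that the axes of the two reflections bisect the action. Once that is in hand, the whole theorem becomes a question about when such a rotation can live in a discrete reflection group.

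Next I would consider an interior angle of $Q$ at a vertex $v$ where adjacent sides $S$ and $T$ meet, and look at the subgroup $\langle g_S, g_T\rangle \leq G$. By the reflection observation above, $g_S \circ g_T$ is a rotation about $v$ by angle $2\theta(S,T)$, so $\langle g_S, g_T\rangle$ is a dihedral group whose cyclic rotation subgroup is generated by this rotation. Discreteness of $G$ forces this cyclic subgroup to be discrete, and since any discrete group of rotations of $\mathbb{H}^2$ fixing a common point is finite, we conclude $2\theta(S,T) = 2\pi m / n$ for some coprime integers $m, n$ with $n \geq 1$. To upgrade this to $\theta(S,T) = \pi/k$, I would invoke the tiling property: the $G$-translates of $Q$ tessellate $\mathbb{H}^2$, and around $v$ these translates fit together without overlap filling a full $2\pi$. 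Because adjacent tiles in the tessellation differ by a single reflection, walking around $v$ visits $Q$-copies whose orientations alternate, and the number of copies around $v$ must therefore be even, say $2k$. Each copy contributes angle $\theta(S,T)$, giving $2k\,\theta(S,T) = 2\pi$, i.e., $\theta(S,T) = \pi/k$.

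Having established $\theta(S,T) = \pi/k$, the second part of the theorem is immediate: $g_S \circ g_T$ is rotation about $v$ by $2\pi/k$, whose order in $\mathrm{Isom}^+(\mathbb{H}^2)$ is exactly $k$.

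The main obstacle is the tiling step, specifically the justification that precisely $2k$ copies (and not some other count) fit around $v$. I would handle this by first using discreteness plus the fact that $Q$ is a fundamental polygon to conclude that the $G$-orbit of $Q$ tessellates some neighborhood of $v$, then using the alternating-orientations observation to force the count to be even. One should also briefly flag the ideal case: when $S$ and $T$ meet on $\partial\mathbb{H}^2$ rather than in $\mathbb{H}^2$, $g_S \circ g_T$ is parabolic, the interior angle is $0$, and the statement of the theorem is vacuous at that vertex (consistent with the convention in Section~\ref{prelims} that $0$ counts as an ``integer submultiple of $\pi$'' in the $k = \infty$ sense).
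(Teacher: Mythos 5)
Your argument is correct, but note that the paper does not prove Theorem~\ref{5.1} at all: it is quoted as a standard fact from Ratcliffe, Section~7.1 (the sentence immediately preceding it reads ``We first recall the following results regarding reflection groups''), so there is no internal proof to compare against. What you have written is essentially the textbook argument that the citation points to, and it is sound: reflections in geodesics meeting at angle $\theta$ compose to a rotation by $2\theta$; discreteness makes the cyclic group it generates finite, giving $\theta=\pi m/n$; and the tessellation of a neighborhood of $v$ by the $2n$ images of $Q$ under the dihedral group $\langle g_S,g_T\rangle$, which alternate in orientation and have pairwise disjoint interiors, forces $m=1$. One point deserves emphasis, since the paper never spells out the definition of ``discrete reflection group with respect to the polygon $Q$'': your tiling step is not a consequence of discreteness alone but of the definitional requirement that nontrivial elements of $G$ carry the interior of $Q$ off itself. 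Without that clause the statement is false (a discrete group generated by reflections in the sides of a polygon can have an angle of the form $m\pi/n$ with $m>1$), so you are genuinely using the hypothesis there rather than deriving it. A slight streamlining of your ``main obstacle'': rather than arguing that the tiles close up around $v$ in exactly $2k$ steps, it suffices to observe that the $2n$ dihedral images of $Q$ have disjoint interiors and each subtends angle $\theta=\pi m/n$ at $v$, so $2n\cdot\pi m/n\leq 2\pi$ forces $m\leq 1$; this avoids the winding-number bookkeeping. Your handling of the ideal case and of the order of $g_S\circ g_T$ is fine.
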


\begin{thm}\label{5.2} Let $Q$ be a finite-sided convex hyperbolic polygon of finite volume, all of whose angles are submultiples of $\pi$. Then the group $G$, generated by reflections of $\mathbb{H}^2$ in the sides of $Q$, is a discrete reflection group with respect to the polygon $Q$.\end{thm}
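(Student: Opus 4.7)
The plan is to verify the hypotheses of Poincar\'e's polygon theorem for $Q$ together with the side-pairings given by the reflections $g_S$ in each side $S$. The side-pairing structure here is slightly nonstandard: each side is paired to itself by the orientation-reversing involution $g_S$, rather than being matched in pairs. A version of Poincar\'e's theorem allowing such self-pairings (as in Ratcliffe) will then yield that the group $G$ generated by the $g_S$ is discrete, that $Q$ is a fundamental domain for $G$, and that $G$ admits a presentation by the $g_S$ subject to $g_S^2 = 1$ and the vertex cycle relations.

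First I would check the cycle condition at each finite vertex $v$ of $Q$. If the two sides $S$ and $T$ meeting at $v$ make angle $\pi/k$, then the vertex cycle under the side-pairings is obtained by alternately reflecting in $S$ and $T$; after $2k$ reflections the angle swept around $v$ is $2k\cdot\pi/k = 2\pi$, and the composed isometry returns to the identity, giving the cycle relation $(g_S g_T)^k = 1$. This step uses in an essential way the hypothesis that every angle of $Q$ is a submultiple of $\pi$, and it is consistent with Theorem \ref{5.1}. For an ideal vertex of $Q$, the two adjacent sides meet at angle $0$; the composition $g_S g_T$ is parabolic and fixes the ideal vertex, so the parabolic completeness condition at that cusp is automatic since horocycles based there are preserved.

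With all cycle and completeness conditions in hand, Poincar\'e's theorem produces a discrete group with fundamental polygon $Q$ generated by the reflections $g_S$; this is exactly the statement that $G$ is a discrete reflection group with respect to $Q$. The main obstacle is purely technical: formulating Poincar\'e's theorem in a form that admits involutive self-pairings of sides. One standard workaround is to bisect each side $S$ by an interior point and treat the two halves as a pair matched by $g_S$, after which the standard vertex-cycle analysis proceeds as above and the angle-sum calculation at each vertex is immediate from the hypothesis on the angles of $Q$.
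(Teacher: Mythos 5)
The paper does not actually prove this statement: Theorem \ref{5.2} is recalled, without proof, from Ratcliffe, Section 7.1, so there is no in-paper argument to compare yours against. Your route via Poincar\'e's polygon theorem with mirror (self-paired) sides is the standard way to establish it and is essentially correct: the vertex-cycle check at a finite vertex of angle $\pi/k$ (that $2k$ alternating reflections in the two adjacent sides sweep out total angle $2k\cdot\pi/k = 2\pi$ and yield the relation $(g_S g_T)^k = 1$), together with the observation that the cycle transformation at an ideal vertex is parabolic and preserves horocycles there, so the cusp is complete, are exactly the hypotheses that need verifying; finite-sidedness and finite volume ensure there are only finitely many such conditions. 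Ratcliffe's own proof does not invoke a general Poincar\'e theorem but instead builds the tessellation $\{gQ : g \in G\}$ directly from the abstract Coxeter group and shows the natural map is an isomorphism; the content is the same, and your version is arguably more transparent given that the paper already uses Poincar\'e's Polyhedron Theorem elsewhere.

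One detail in your final paragraph is off. Bisecting a side $S$ at an interior point and ``treating the two halves as a pair matched by $g_S$'' does not work for a reflection: $g_S$ fixes $S$ pointwise, so it carries each half of $S$ to itself rather than interchanging the two halves. That bisection trick is designed for sides self-paired by an orientation-preserving half-turn about the midpoint, not for mirror sides. To make your argument rigorous you must cite a version of Poincar\'e's theorem that explicitly admits orientation-reversing side-pairings and, in particular, reflection (mirror) sides --- such formulations exist, e.g.\ for non-Euclidean crystallographic groups or in Maskit's treatment --- or else fall back on the direct tessellation argument. This is a citation/presentation wrinkle rather than a mathematical gap; the angle and completeness computations you perform are the substantive part and they are right.
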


We will appeal to these results, as well as to the results of Sections $3$ and $4$, in the following discussion.

\begin{thm}\label{5.3} If the finitely generated, orientation-preserving, finite coarea Fuchsian group $\Gamma$ admits a Double Dirichlet domain, or a DF domain, $P$, then $\Gamma$ is an index $2$ subgroup of the discrete group $G$ of reflections in a hyperbolic polygon $Q$.\end{thm}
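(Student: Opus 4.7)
The strategy is to use the symmetry of $P$ established in Sections \ref{df} and \ref{dd} to cut $P$ in half along its axis of symmetry, producing a polygon $Q$ to which Theorem \ref{5.2} applies, and then to identify $\Gamma$ with the orientation-preserving subgroup of the resulting reflection group $G$.

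Let $L$ denote the axis of symmetry of $P$ provided by the previous sections: $L = \{\mathrm{Re}(z) = x_0 + \tfrac{1}{2}\}$ in the DF case, and the geodesic through $z_0$ and $z_1$ in the Double Dirichlet case. In either case $P$ is invariant under the reflection $\sigma_L$ in $L$, and every side-pairing $\gamma$ of $P$ sends its side $S_\gamma$ to $S_{\gamma^{-1}} = \sigma_L(S_\gamma)$. Let $Q$ be one of the two closed half-polygons into which $L$ divides $P$; as an intersection of $P$ with a closed half-plane, $Q$ is convex. I would check the hypotheses of Theorem \ref{5.2} by enumerating vertex types of $Q$: a vertex $v \notin L$ of $P$ sits in a two-element vertex cycle $\{v, \sigma_L(v)\}$, so the cycle condition $\alpha_v + \alpha_{\sigma_L(v)} = 2\pi/n$ combined with reflective symmetry forces $\alpha_v = \pi/n$; a finite vertex $v \in L$ of $P$ is a singleton cycle of angle $2\pi/n$, halved by $L$ to $\pi/n$ in $Q$; ideal vertices on $L$ (such as the cusp at $\infty$ in the DF case) contribute angle $0$. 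A side of $P$ cannot cross $L$ transversely in its interior, since such a crossing would force that side to be $\sigma_L$-invariant and hence paired with itself by an order-$2$ involution, whose fixed point is by convention already a vertex of $P$.

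Applying Theorem \ref{5.2} then gives that $G := \langle \rho_S : S \mbox{ is a side of } Q \rangle$ is a discrete reflection group with fundamental polygon $Q$. Since $L \cap P$ furnishes one or more sides of $Q$, we have $\sigma_L \in G$. For each side-pairing $\gamma$ of $P$ with $S_\gamma \subset \partial Q$, the identity $\gamma = \sigma_L \circ \rho_{S_\gamma}$ holds: both elements are orientation-preserving, and the endpoint correspondence coming from the vertex cycles $\{v,\sigma_L(v)\}$ shows that both restrict to the same map $S_\gamma \to \sigma_L(S_\gamma)$, so the two isometries must agree. The Poincar\'e polygon theorem then yields $\Gamma \subseteq G$, and since $\Gamma$ is orientation-preserving, $\Gamma \subseteq G^+$. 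Comparing covolumes via $P = Q \cup \sigma_L(Q)$ gives $[G : \Gamma] = 2$, whence $\Gamma = G^+$, as required.

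The main obstacle I anticipate is the vertex/angle analysis, particularly the case enumeration at vertices lying on $L$ (finite elliptic versus ideal) and the careful handling of the convention on involutive side-pairings, which must be in place so that $Q$ is a genuine polygon of the type required by Theorem \ref{5.2}. Once this bookkeeping is secure, the remainder of the argument is essentially a direct comparison of fundamental domains.
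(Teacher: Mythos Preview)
Your proposal is correct and follows essentially the same route as the paper: halve $P$ along $L$ to get $Q$, verify via the symmetric pairing of vertex cycles that every angle of $Q$ is a submultiple of $\pi$, apply Theorem~\ref{5.2}, and then identify $\Gamma$ with the orientation-preserving half of $G$. The only notable difference is bookkeeping: the paper reads off the index~$2$ statement by explicitly rewriting the generating set $\{\sigma_L\circ\sigma_i\}$ of $\Gamma$ after adjoining $\sigma_L$, whereas you obtain it by a covolume comparison; and the paper's justification of $\gamma=\sigma_L\circ\sigma_i$ in the DF case leans on the standard isometric-circle decomposition of an isometry rather than your endpoint-matching argument (which also works, since Lemma~\ref{Glue} gives the pointwise correspondence you need).
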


\begin{proof} Suppose first that $\Gamma$ admits a DF domain $P$. We know that $P$ has reflectional symmetry about a vertical axis $L$ which bisects $P$. Since $P$ is a fundamental domain for $\Gamma$, the side-pairings of $P$ generate $\Gamma$. Each side pairing, with the exception of the parabolic element pairing the vertical sides, has the form $\sigma_L \circ \sigma_i$, where $\sigma_L$ denotes reflection in $L$ and $\sigma_i$ is reflection in the isometric circle $S_i$, $1 \leq i \leq m$, where $S_i$ contains a side of $P$. Furthermore, since each side is paired with its mirror image in $L$, it suffices to consider the $\sigma_i$ corresponding to sides in one half of $P$. The parabolic side-pairing can be written $\sigma_L \circ \sigma_K$, where $\sigma_K$ is reflection in $K$, the vertical side of $P$ in the same half as the $S_i$. Thus we have a generating set for $\Gamma$ of the form
$$\{ \sigma_L \circ \sigma_1,\ \ldots \ , \sigma_L \circ \sigma_m, \sigma_L \circ \sigma_K \}$$
for some $m \in \mathbb{N}$. Consider the group $G$ obtained by adding the reflection $\sigma_L$ to this generating set. The set becomes
$$\{ \sigma_L, \sigma_L \circ \sigma_1,\ \ldots \ , \sigma_L \circ \sigma_m, \sigma_L \circ \sigma_K \}$$
and because $\sigma_L = \sigma_L^{-1}$ has order $2$, it follows that we can replace the generator $\sigma_L \circ \sigma_i$ with the element $\sigma_i$ and still have a generating set. The generating set
$$\{ \sigma_L, \sigma_1,\ \ldots \ , \sigma_m, \sigma_K \}$$
is precisely the set of reflections in the sides of a polygon $Q$ with sides on $K$, $L$ and $S_i$, $1 \leq i \leq m$. To prove that all of the angles of $Q$ are submultiples of $\pi$, it suffices to observe that the vertices of $P$ are paired symmetrically, and that the Poincar\'{e} Polyhedron Theorem gives that the sum of the angles in each cycle is $\frac{2\pi}{s}$, for $s \in \mathbb{N}$. Now Theorem \ref{5.2} allows us to reach the desired conclusion.\vspace{2 mm}\\
To prove the result for the case where $L \cap \textit{\r{P}}$ does not extend to $\partial \mathbb{H}^2$, we simply observe that in this case, every side-pairing generator of $\Gamma$ can be written $\sigma_L \circ \sigma_i$, since here there are no vertical sides. Instead of the cusp at $\infty$ we have another finite vertex of $P$, but since this vertex lies on the line $L$, it must also be an elliptic fixed point, and the paragraph above applies. \end{proof}

We now turn to the converse of the above result.

\begin{thm}\label{5.4} If $G$ is a discrete group of reflections in a polygon $Q \subset \mathbb{H}^2$, then $G$ contains an index $2$ subgroup of orientation-preserving isometries which admits a Double Dirichlet domain (and a DF domain if $Q$ has an ideal vertex at $\infty$).\end{thm}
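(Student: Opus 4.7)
The plan is to construct $P$ by reflecting $Q$ across an appropriately chosen side. Let $\Gamma \le G$ be the index $2$ orientation-preserving subgroup, and choose a side $L$ of $Q$; when $Q$ has an ideal vertex I would first conjugate $G$ in $\mathrm{PSL}_2(\mathbb{R})$ so that this vertex lies at $\infty$ and take $L$ to be one of the two sides of $Q$ meeting there, so that $L$ lies on a vertical geodesic. Writing $\sigma_L$ for reflection in the geodesic carrying $L$, set
$$P := Q \cup \sigma_L(Q).$$
Since $\sigma_L \in G \setminus \Gamma$ realises the nontrivial coset, $P$ is a fundamental domain for $\Gamma$; it is symmetric under $\sigma_L$ by construction, and because the angles of $Q$ are submultiples of $\pi$ the doubled angle at any vertex of $P$ on $L$ is at most $\pi$, so $P$ is convex.

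For the Double Dirichlet property, for each side $S$ of $Q$ with $S \neq L$ let $\sigma_S$ denote reflection in the geodesic carrying $S$ and put $\gamma_S := \sigma_L\sigma_S \in \Gamma$. These elements are the side-pairings of $P$, with $\gamma_S$ pairing $S$ to $\sigma_L(S)$. Fix any $z_0$ in the relative interior of $L \cap P$. Because $\sigma_L(z_0) = z_0$,
$$\gamma_S^{-1}(z_0) = \sigma_S\sigma_L(z_0) = \sigma_S(z_0),$$
so the perpendicular bisector of $z_0$ and $\gamma_S^{-1}(z_0)$ is exactly the geodesic carrying $S$; similarly the bisector for $\gamma_S(z_0)$ is the geodesic carrying $\sigma_L(S)$. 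Hence the Dirichlet domain $D(z_0)$ is contained in the intersection of the half-planes cut out by the sides of $P$, which is the convex set $P$ itself. As $D(z_0)$ and $P$ are both fundamental domains of the same finite area, they coincide. Letting $z_0$ range over the infinitely many points of $L \cap \mathrm{int}\,P$ yields the Double Dirichlet structure.

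When $Q$ has an ideal vertex placed at $\infty$, I would additionally check that this $P$ is a Ford domain. Let $K$ denote the other side of $Q$ meeting $\infty$; then $K$ and $\sigma_L(K)$ are the two vertical sides of $P$, bounding a vertical strip that is a fundamental domain for $\Gamma_\infty = \langle\sigma_L\sigma_K\rangle$. Every finite side $S$ of $P$ is itself an isometric circle: the decomposition $\gamma_S = \sigma_L\sigma_S$ exhibits $\gamma_S$ as reflection in the semicircle $S$ followed by the Euclidean reflection $\sigma_L$, so $S = S_{\gamma_S}$ and $\sigma_L(S) = S_{\gamma_S^{-1}}$. Thus $P$ equals the intersection of the exteriors of finitely many isometric circles with the vertical strip. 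Comparing with the Ford domain $F$, which imposes the exterior condition for \emph{all} $\gamma \in \Gamma \setminus \Gamma_\infty$, yields $F \subseteq P$, and equality of areas forces $F = P$.

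The step that takes the most care is justifying that $P = Q\cup\sigma_L(Q)$ is genuinely a fundamental domain for $\Gamma$; this is the standard doubling construction for the orientation-preserving index $2$ subgroup of a reflection group, and given it, the two equalities $D(z_0) = P$ and $F = P$ follow from the one-sided inclusions above together with area comparison. I do not expect substantial further obstacles, as Theorems \ref{5.1} and \ref{5.2} together with the Poincar\'e polygon theorem supply the needed ingredients for the fundamental-domain claim.
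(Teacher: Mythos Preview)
Your proposal is correct and follows essentially the same approach as the paper: double $Q$ across a chosen side $L$ to obtain $P=Q\cup\sigma_L(Q)$, identify the side-pairings as $\gamma_S=\sigma_L\sigma_S$, and then verify both the Dirichlet and Ford properties by showing the defining half-spaces/isometric circles for these generators are exactly the sides of $P$, concluding via the inclusion-plus-area argument. The paper spells out a bit more explicitly why the $\sigma_L\sigma_S$ generate the full index~$2$ subgroup (via the rewriting $\sigma_2\sigma_1=(\sigma_L\sigma_2)^{-1}(\sigma_L\sigma_1)$), which is the content of the ``standard doubling'' step you defer; otherwise the arguments match.
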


\begin{proof} Let $Q$ be such a polygon. If necessary, rotate $Q$ so that one of its sides is vertical. Call this side $L$. By Theorem \ref{5.1}, all angles of $Q$ are submultiples of $\pi$. Denote by $\sigma_L$ reflection in the vertical side $L$ of $Q$. If $Q$ has another vertical side (and hence an ideal vertex at $\infty$), call this side $K$ and denote reflection in $K$ by $\sigma_K$. Denote by $\sigma_i$ reflection in the (non-vertical) line $S_i$ containing a side of $Q$. By definition, these reflections constitute a generating set for $G$. Let $\Gamma < G$ be the subgroup generated by elements of the form $\sigma_2 \circ \sigma_1$ where $\sigma_1$ and $\sigma_2$ are reflections in the generating set for $G$. Then $\Gamma$ is a Fuchsian group. Since $\sigma_L \notin \Gamma$, we see that the set $P := Q \cup \sigma_L Q$ is contained within a fundamental domain for $\Gamma$. We will show that $P$ is itself a fundamental domain for $\Gamma$.\vspace{2 mm}\\
To see this, denote by $T_i := \sigma_L(S_i)$ the geodesic obtained by reflecting $S_i$ in $L$. Then $T_i$ contains a side of $P$. Also denote $\sigma_L(K)$ by $M$. Then $K$ is paired with $M$ by the element $\sigma_L \circ \sigma_K \in \Gamma$, and $S_i$ is paired with $T_i$ by $\sigma_L \circ \sigma_i$. Thus the sides of $P$ are paired by generators of $\Gamma$. To see that these side-pairings generate $\Gamma$ themselves, consider a generating element $\sigma_2 \circ \sigma_1 \in \Gamma$. We may write 
$$\sigma_2 \circ \sigma_1 = \sigma_2 \circ (\sigma_L \circ \sigma_L) \circ \sigma_1 = (\sigma_2 \circ \sigma_L) \circ (\sigma_L \circ \sigma_1) = (\sigma_L \circ \sigma_2)^{-1} \circ (\sigma_L \circ \sigma_1),$$
which shows that together, the elements $\sigma_L \circ \sigma_i$ and $\sigma_L \circ \sigma_K$ generate $\Gamma$. We therefore have that $\Gamma$ has index $2$ in $G$, and that $P$ is a fundamental domain for $\Gamma$.\vspace{2 mm}\\
To see that $\Gamma$ admits a fundamental domain of the required type, it will suffice to check that $P$ is one. Let $z_0$ be any point on the line $L$ which lies in the interior of $P$. If there is a second vertical side $K$, then it is the line bisecting $z_0$ and $\sigma_K(z_0)$, so $\sigma_L(K) = M$ bisects $\sigma_L(z_0) = z_0$ and $\sigma_L(\sigma_K(z_0))$. Thus $M$ is a line of the form found in the definition on a Dirichlet domain centered at $z_0$. A similar argument applied to $(\sigma_L \circ \sigma_K)^{-1} = \sigma_K \circ \sigma_L$ shows that $K$ is also such a line. Now $S_i$ is the line bisecting $z_0$ and $\sigma_i(z_0)$, so $\sigma_L(S_i) = T_i$ bisects $\sigma_L(z_0) = z_0$ and $\sigma_L(\sigma_i(z_0))$. This shows that $T_i$ is a line of the form found in the definition on a Dirichlet domain centered at $z_0$. A similar argument shows that the same is true of $S_i$, and thus we see that $P$ must contain a Dirichlet domain centered at $z_0$. But we know that $P$ is itself a fundamental domain for $\Gamma$, so that $P$ is a Dirichlet domain for any center $z_0 \in L \cap \textit{\r{P}}$.\vspace{2 mm}\\
If there is a second vertical side $K$, we must also check that $P$ is a Ford domain. $S_i$ is the isometric circle of the generator $\sigma_L \circ \sigma_i$, and $T_i = \sigma_L(S_i)$ is the isometric circle of the inverse element. Since $\sigma_L \circ \sigma_K$ pairs the two vertical sides of $P$ and generates $\Gamma_\infty$, it follows that $P$ must contain a Ford domain for $\Gamma$. But $P$ is itself a fundamental domain, so this Ford domain cannot be a proper subset, and hence is equal to $P$.\end{proof}

We now show that Fuchsian groups with this symmetrical property, though they necessarily have genus zero, have no other restrictions on their signature.

\begin{cor}\label{cor} Given the signature $(0; n_1, \ldots , n_t ; m)$ of a (non-trivial, hyperbolic) sphere with $m \geq 0$ punctures and $t \geq 0$ cone points of orders $n_i \in \mathbb{N}$, for $1 \leq i \leq t$, there exists a Fuchsian group $\Gamma$ such that $\Gamma$ admits a Double Dirichlet domain (and a DF domain if $m > 0$) and $\mathbb{H}^2 / \Gamma$ is a sphere of the given signature.\end{cor}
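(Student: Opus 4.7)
The plan is to apply Theorem \ref{5.4} to an appropriately constructed polygon $Q$. Given the hyperbolic signature $(0; n_1, \ldots, n_t; m)$, negativity of the orbifold Euler characteristic rearranges to
$$\sum_{i=1}^t \frac{\pi}{n_i} \; < \; (m + t - 2)\,\pi,$$
which also forces $N := m + t \geq 3$. By the standard existence theorem for convex hyperbolic polygons with prescribed interior angles, there is a convex finite-area polygon $Q \subset \mathbb{H}^2$ with exactly $t$ finite vertices of angles $\pi/n_1, \ldots, \pi/n_t$ and $m$ ideal vertices, in any cyclic order I choose.

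If $m > 0$, conjugate so that one ideal vertex of $Q$ lies at $\infty$; the two sides of $Q$ meeting at that vertex are then vertical, and I name one of them $L$ and the other $K$. If $m = 0$, pick any side of $Q$, conjugate so that it is vertical, and call this side $L$. Let $G$ be the group generated by reflections in the sides of $Q$; by Theorem \ref{5.2} this is a discrete reflection group, and Theorem \ref{5.4} produces an orientation-preserving index $2$ subgroup $\Gamma < G$ admitting $P := Q \cup \sigma_L(Q)$ as a Double Dirichlet domain, and as a DF domain precisely when $Q$ has an ideal vertex at $\infty$, that is, when $m > 0$.

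It remains to check that $\mathbb{H}^2/\Gamma$ has signature $(0; n_1, \ldots, n_t; m)$. The underlying space of the orbifold $\mathbb{H}^2/G$ is the disk $Q$ with mirror boundary, so its orientation double cover $\mathbb{H}^2/\Gamma$ is topologically a sphere; under this cover each finite vertex of $Q$ of angle $\pi/n_i$ lifts to a cone point of order $n_i$, and each ideal vertex lifts to a cusp. Equivalently, from the side-pairings of $P$ described in the proof of Theorem \ref{5.4}, each vertex of $Q$ corresponds to exactly one $\Gamma$-orbit of vertices of $P$, whose stabilizer in $\Gamma$ is generated by the product of the two reflections in the sides of $Q$ meeting at that vertex---an elliptic element of order $n_i$ by Theorem \ref{5.1} at a finite vertex, and a parabolic element at an ideal vertex. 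Hence $\mathbb{H}^2/\Gamma$ is the required sphere.

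The only technical obstacle is the first step: realizing the polygon $Q$ with the prescribed angle sequence and number of ideal vertices. This is well known (one standard approach is to decompose the desired polygon into hyperbolic triangles with compatible angles and glue them, or to use a continuity argument on the moduli of convex hyperbolic $N$-gons), so this reduces the corollary to a direct application of Theorem \ref{5.4}.
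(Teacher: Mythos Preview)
Your proposal is correct and follows essentially the same approach as the paper: construct a convex hyperbolic polygon $Q$ with $t$ finite vertices of angles $\pi/n_i$ and $m$ ideal vertices (placing one at $\infty$ when $m>0$), then apply Theorem~\ref{5.4}. The paper's proof is terser---it simply asserts the existence of $Q$ and invokes the Poincar\'e Polyhedron Theorem for the signature---whereas you spell out the angle-sum inequality needed for existence and give two equivalent descriptions of why the quotient has the correct cone and cusp structure; but the underlying argument is the same.
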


\begin{proof} Suppose $m > 0$. Construct $Q$ by placing one vertex at $\infty$, $t$ vertices in $\mathbb{H}^2$ of angles $\frac{\pi}{n_i}$ ($n_i \geq 2$) for $1 \leq i \leq t$, and $m-1$ ideal vertices in $\mathbb{R}$. If $m=0$, construct a compact $t$-gon with angles $\frac{\pi}{n_1}, \ \ldots \ , \frac{\pi}{n_t}$. Now let $G$ be the group of hyperbolic isometries generated by reflections in the sides of $Q$. By Theorem \ref{5.4}, $\Gamma$ admits a DF domain $P = Q \cup \sigma Q$, where $\sigma$ denotes reflection in one of the vertical sides $L$ of $Q$. The symmetrical identifications, combined with the Poincar\'{e} Polyhedron Theorem, give that the quotient surface has the required signature.\end{proof}

\textbf{Remark.} If $m>0$ above, then there is a certain amount of freedom in our choice of the polygon $Q$. For example, we do not necessarily have to place one of the ideal vertices of $Q$ at $\infty$. We do so in order to ensure that we obtain a DF domain for $\Gamma$. Instead, we could have all of the ideal vertices lie in $\mathbb{R}$, thereby placing the line of symmetry $L$ away from any of the ideal vertices. Similarly, if $m > 1$, we could construct $Q$ so that $L$ meets only one of the $m$ ideal vertices, instead of $2$ in the construction above. We also do not have to construct $Q$ so that each angle is bisected by a vertical line; we only do so in order to demonstrate that it is possible to find the required polygon.

\section{A Non-congruence Maximal Arithmetic Reflection Group}\label{example}

In this section, we will prove explicitly that there exists a non-congruence maximal arithmetic hyperbolic reflection group. Recall that a non-cocompact hyperbolic reflection group $\Gamma_{\mathrm{ref}} < \mathrm{Isom}(\mathbb{H}^2)$ is called \emph{arithmetic} if and only if it is commensurable with $\mathrm{PSL}_2(\bZ)$. Such a group is then called \emph{congruence} if it contains some principal congruence subgroup
\[ \Gamma(n) = \left\lbrace \begin{pmatrix}a & b \\ c & d\end{pmatrix} \ \Big| \ a \equiv d \equiv \pm 1, b \equiv c \equiv 0 \ \mathrm{mod} \ n \right\rbrace \subset \mathrm{PSL}_2(\bZ). \]

Consider the group $\Gamma < \mathrm{PSL}_2(\mathbb{R})$ generated by the matrices
\[ \gamma_1=\begin{pmatrix}1 & 1 \\ 0 & 1 \end{pmatrix}, \gamma_2 = \begin{pmatrix}0 & \frac{-1}{\sqrt{11}} \\ \sqrt{11} & 0 \end{pmatrix}, \gamma_3 =  \begin{pmatrix}\sqrt{11} & \frac{5}{\sqrt{11}} \\ 2\sqrt{11} & \sqrt{11} \end{pmatrix}, \gamma_4 = \begin{pmatrix}10 & 3 \\ 33 & 10 \end{pmatrix}, \gamma_5 = \begin{pmatrix}23 & 8 \\ 66 & 23 \end{pmatrix}. \]
We first wish to show that $\Gamma$ is discrete. Consider the group
\[ \Gamma_0(11) = \left\lbrace \begin{pmatrix}a & b \\ 11c & d\end{pmatrix} \ \Big| \ a, b, c, d \in \bZ, \ ad-11bc=1 \right\rbrace \subset \mathrm{PSL}_2(\bZ).\]
It is well-known (\cite{ternary}, \cite{LMR}) that the normalizer $N(\Gamma_0(11))$ of $\Gamma_0(11)$ in $\mathrm{PSL}_2(\mathbb{R})$ is a (maximal arithmetic) Fuchsian group generated by $\Gamma_0(11)$ and 
\[ \begin{pmatrix}0 & -\frac{1}{\sqrt{11}} \\ \sqrt{11} & 0 \end{pmatrix}, \]
which is $\gamma_2 \in \Gamma$. We see then that 
\[ \begin{pmatrix}0 & -\frac{1}{\sqrt{11}} \\ \sqrt{11} & 0 \end{pmatrix} \begin{pmatrix} 2 & 1 \\ -11 & -5 \end{pmatrix} = \begin{pmatrix}\sqrt{11} & \frac{5}{\sqrt{11}} \\ 2\sqrt{11} & \sqrt{11} \end{pmatrix} = \gamma_3 \in \Gamma, \]
and since $\gamma_1, \gamma_4, \gamma_5 \in \Gamma_0(11)$, we have that $\Gamma < N(\Gamma_0(11))$, and so $\Gamma$ is discrete. We next wish to construct a Ford domain for $\Gamma$. In Figure \ref{ford01} we see the isometric circles corresponding to the generators listed above and their inverses.
\begin{figure}[htb]
\begin{center}
\includegraphics[scale=0.75]{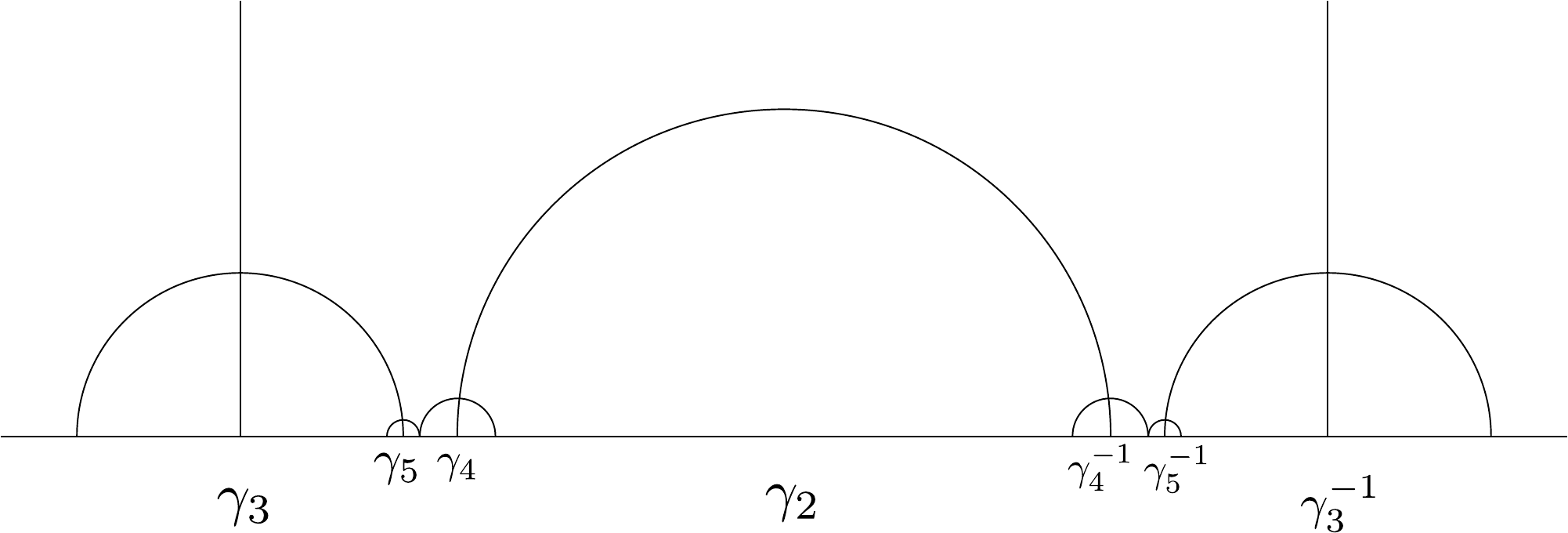}
\caption{\label{ford01}A Ford domain for $\Gamma$}
\end{center}
\end{figure}
The claim is that this polygon is in fact the required Ford domain. To see this, observe that each generator $\gamma_i$ can be decomposed into the product of two reflections $\gamma_i = \sigma_L \circ \sigma_i$, where $\sigma_i$ is reflection in the isometric circle $S_i$ of $\gamma_i$, $\sigma_1$ is reflection in the line $x=-\frac{1}{2}$, and $\sigma_L$ is reflection in the line $x=0$. Thus the elements of the generating set for $\Gamma$ pair the sides of $P$, and each pushes $\textit{\r{P}}$ completely off itself. This shows that $P$ is a fundamental domain for $\Gamma$; by its construction, it is a Ford domain.

Thus we see that the quotient space $\mathbb{H}^2 / \Gamma$ is a sphere of signature $(0; 2, 2, 2, 2; 2)$ and area $4\pi$. Further, $P$ is a DF domain, as each of these generators pairs one side $S_i$ of $P$ with its reflection $\sigma_L(S_i)$ in the line $x=0$. Thus, by Theorem \ref{5.3}, we see that $\Gamma$ is the index $2$ orientation-preserving subgroup of the group $\Gamma_{\mathrm{ref}}$ of reflections in a hyperbolic hexagon $Q$ with angles $(0, \frac{\pi}{2}, \frac{\pi}{2}, 0, \frac{\pi}{2}, \frac{\pi}{2})$. The claim is that this hyperbolic reflection group $\Gamma_{\mathrm{ref}}$ is arithmetic, maximal (as an arithmetic reflection group), and non-congruence.\vspace{2mm}

\textbf{Claim 1.} \textit{$\Gamma_{\mathrm{ref}}$ is arithmetic.}

\begin{proof}Since $\Gamma_{\mathrm{ref}}$ is not cocompact, we need to show that it is commensurable with $\mathrm{PSL}_2(\bZ)$. The group $G = \Gamma \cap \mathrm{PSL}_2(\bZ)$ is not equal to $\Gamma$, by the presence of the non-integral elements $\gamma_2$ and $\gamma_3$. However, it contains the matrices
\[ \gamma_1 = \begin{pmatrix}1 & 1 \\ 0 & 1 \end{pmatrix}, \ \ \gamma_2 \gamma_1^{-1} \gamma_2 = \begin{pmatrix} 1 & 0 \\ 11 & 1 \end{pmatrix}, \ \ \gamma_2 \gamma_3 = \begin{pmatrix}-2 & -1 \\ 11 & 5 \end{pmatrix},\]
\[ \gamma_2 \gamma_3^{-1} = \begin{pmatrix}2 & -1 \\ 11 & -5 \end{pmatrix}, \ \ \gamma_4 = \begin{pmatrix}10 & 3 \\ 33 & 10 \end{pmatrix}, \ \ \gamma_5 = \begin{pmatrix}23 & 8 \\ 66 & 23 \end{pmatrix}. \]
The isometric circles of these elements and their inverses are shown in Figure \ref{ford02}. Notice that the isometric circles centered at $\frac{2}{11}$ and $-\frac{2}{11}$ are paired with those at $\frac{5}{11}$ and $-\frac{5}{11}$ respectively; with these four circles excepted, each other side is paired with its reflection in the line $x=0$. There are four equivalence classes of ideal points: these classes are $\lbrace \infty \rbrace$, $\lbrace 0 \rbrace$, $\lbrace \frac{1}{3}, -\frac{1}{3} \rbrace$, $\lbrace \frac{4}{11}, \frac{3}{11}, -\frac{3}{11}, -\frac{4}{11} \rbrace$. All four finite vertices belong to the same cycle, and their angles are $\pi/3$ at $x=\pm\frac{1}{2}$, and $2\pi/3$ at $x=\pm\frac{3}{22}$, giving angle sum $2\pi$.
\begin{figure}[htb]
\begin{center}
\includegraphics[scale=0.75]{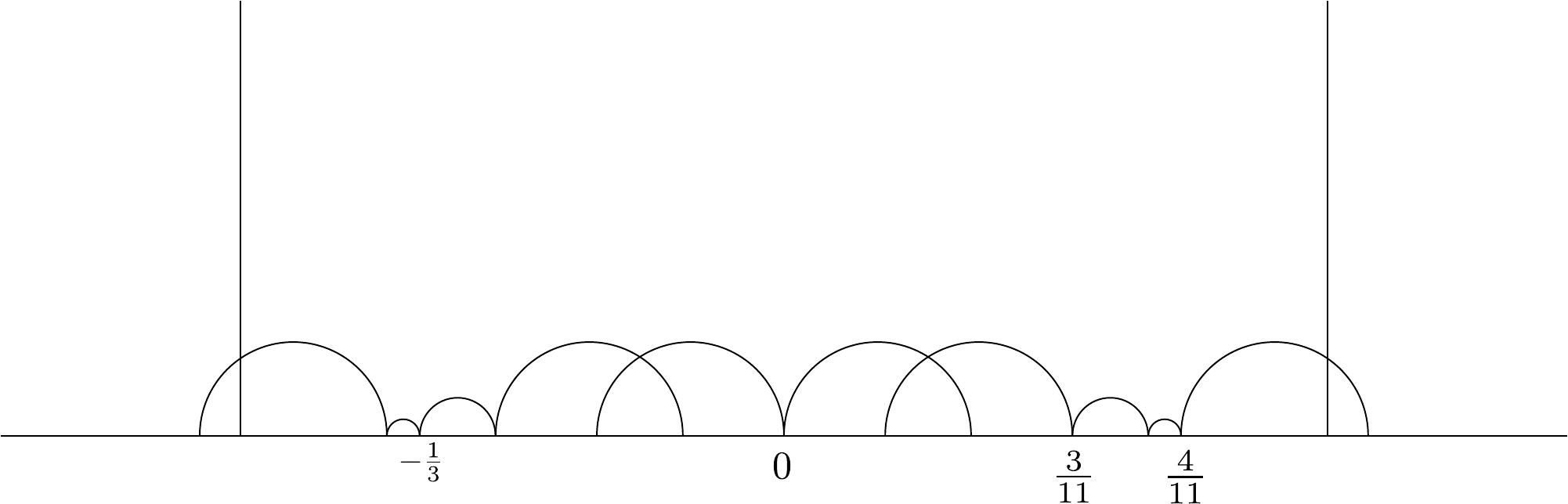}
\caption{\label{ford02}A Ford domain for $G$}
\end{center}
\end{figure}
The region $P_G$ of $\mathbb{H}^2$ bounded by these circles and the lines $x=-\frac{1}{2}$ and $x=\frac{1}{2}$ has area $8\pi$ and contains a Ford domain for $G$. This is enough for us to conclude that it is a Ford domain for $G$: since $G$ is a proper subgroup of $\Gamma$, of finite index due to the finite area of $P_G$, $G$ must have coarea a multiple $4m\pi$ of $4\pi$, where $m=\lbrack \Gamma : G \rbrack > 1$. That the area of $P_G$ is $8\pi$ tells us that $m \leq 2$, and hence that in fact $m=2$. So $G$ has index $2$ in $\Gamma$. Thus, $\Gamma_{\mathrm{ref}}$ shares the finite index subgroup $G$ with $\mathrm{PSL}_2(\bZ)$.\end{proof}
\newpage

\textbf{Claim 2.} \textit{$\Gamma_{\mathrm{ref}}$ is a maximal reflection group.}

\begin{proof}If $\Gamma_{\mathrm{ref}}$ were not maximal, it would be properly contained in another reflection group $H_{\mathrm{ref}}$, which is therefore also arithmetic. Let $H < H_{\mathrm{ref}}$ denote the orientation-preserving index $2$ subgroup. Note that then we have $\Gamma < H$. Since $\Gamma$ and $H$ are both arithmetic Fuchsian groups of genus zero, they are contained in a common maximal, arithmetic, genus zero Fuchsian group $M$ from the appropriate list in \cite{LMR}. As we saw above, $\Gamma$ is contained in the normalizer $N(\Gamma_0(11))$, and by area considerations we find that $\lbrack N(\Gamma_0(11)):\Gamma \rbrack=2$. Further, $\Gamma$ cannot be contained in any other of these maximal arithmetic groups; to see this, observe that if $n \neq 11$ then, if we pick some non-zero integer $b$ coprime to $n$, we may find integers $a, d$ such that $\begin{pmatrix}a & b \\ n & d\end{pmatrix} \in \Gamma_0(n)$. We then have
\begin{align*} 
\gamma_2 \begin{pmatrix}a & b \\ n & d\end{pmatrix} \gamma_2 &= \begin{pmatrix}0 & -\frac{1}{\sqrt{11}} \\ \sqrt{11} & 0 \end{pmatrix} \begin{pmatrix}a & b \\ n & d\end{pmatrix} \begin{pmatrix}0 & -\frac{1}{\sqrt{11}} \\ \sqrt{11} & 0 \end{pmatrix}\\
 &= \begin{pmatrix} -d & \frac{n}{11} \\ 11b & -a\end{pmatrix}.
 \end{align*}
We wish to show that this does not belong to $\Gamma_0(n)$. If $n$ is not divisible by $11$ this is clear, so suppose $n \ge 22$ is a multiple of $11$. Then, by construction, $b$ is coprime to $11$, and so $11b$ is not divisible by $n$. This shows that $\gamma_2$ cannot belong to any normalizer $N(\Gamma_0(n))$ except $N(\Gamma_0(11))$.

It remains to verify that we cannot have $H = M = N(\Gamma_0(11))$. Construction of the Ford domain for $N(\Gamma_0(11))$ yields the generating set
\[ \begin{pmatrix}1 & 1 \\ 0 & 1 \end{pmatrix}, \begin{pmatrix}0 & \frac{-1}{\sqrt{11}} \\ \sqrt{11} & 0 \end{pmatrix}, \begin{pmatrix}\sqrt{11} & \frac{5}{\sqrt{11}} \\ 2\sqrt{11} & \sqrt{11} \end{pmatrix}, \begin{pmatrix}\sqrt{11} & \frac{-4}{\sqrt{11}} \\ 3\sqrt{11} & -\sqrt{11} \end{pmatrix}, \begin{pmatrix}-\sqrt{11} & \frac{-4}{\sqrt{11}} \\ 3\sqrt{11} & \sqrt{11} \end{pmatrix}; \]
the Ford domain corresponding to these generators is shown in Figure \ref{ford03}.
\begin{figure}[htb]
\begin{center}
\includegraphics[scale=0.65]{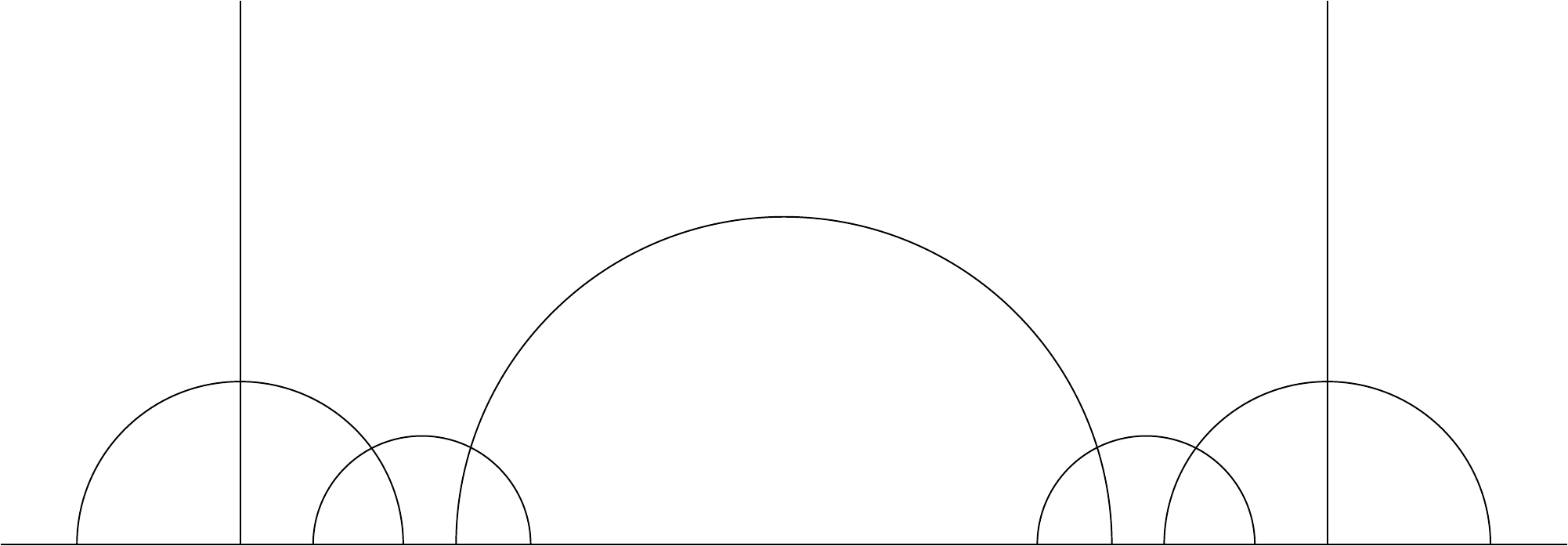}
\caption{\label{ford03}A Ford domain for $N(\Gamma_0(11))$}
\end{center}
\end{figure}
The fact that three of the generating elements are involutions, which pair adjacent sides of the Ford domain, precludes $N(\Gamma_0(11))$ from possessing a DF domain. By Theorem \ref{5.4}, this also precludes it from being an index $2$ subgroup of a reflection group. Thus $\Gamma_{\mathrm{ref}}$ is maximal.\end{proof}

\textbf{Remark.} Since $\Gamma_{\mathrm{ref}}$ is a maximal arithmetic hyperbolic reflection group, one would expect to find it in existing lists of such groups. This example appears to be the lattice $2$-fill$(L_{26.1})$ in Allcock's enumeration \cite{Allcock} of rank $3$ reflective Lorentzian lattices, which would correspond to the case $N=26$ in Nikulin's table $1$ \cite{Nikulin}. If one could show $\Gamma_{\mathrm{ref}}$ is indeed this lattice, this would provide an alternative proof that it is maximal arithmetic; however, we omit this at present, as the proofs given above suffice for our purposes. \vspace{2mm}

\textbf{Claim 3.} \textit{$\Gamma_{\mathrm{ref}}$ is not congruence.}

\begin{proof}Suppose $\Gamma_{\mathrm{ref}}$ is congruence. Then it contains some principal congruence subgroup $\Gamma(n)$. These groups all belong to the modular group, so $\Gamma(n)$ is contained in $G = \Gamma \cap \mathrm{PSL}_2(\bZ)$. By Wohlfahrt's Theorem (see \cite{IntMat}, p. $149$), $G$ contains $\Gamma(n)$ for $n$ equal to the level of $G$; i.e. the smallest natural number such that $G$ contains the normal closure of
\[ T^n = \begin{pmatrix} 1 & 1 \\ 0 & 1 \end{pmatrix}^n = \ \begin{pmatrix} 1 & n \\ 0 & 1 \end{pmatrix} \]
in $\mathrm{PSL}_2(\bZ)$.\\

\textbf{Subclaim.} \textit{The level of $G$ is $11$.}

\begin{proof}To prove this, we need to show that given any $\varphi \in \mathrm{PSL}_2(\bZ)$, we have that $\varphi \, T^{11} \varphi^{-1} \in G$. If $\varphi$ fixes $\infty$ this is clear, so suppose $\varphi(\infty) \neq \infty$. Topologically, $\mathbb{H}^2 / G$ is a torus with four cusps, with the cusp orbits in $\mathbb{Q} \cup \lbrace \infty \rbrace$ represented by $0$, $\infty$, $\frac{1}{3}$ and $\frac{3}{11}$. Therefore $\varphi(\infty)$ is $G$-equivalent to exactly one of these four points; let $g \in G$ be such that $g^{-1} \varphi(\infty)$ is this point. We observe that $T^{11} \in G$; we also find that
\[ \begin{pmatrix} 0 & -1 \\ 1 & 0\end{pmatrix} \begin{pmatrix} 1 & 11 \\ 0 & 1\end{pmatrix} \begin{pmatrix} 0 & -1 \\ 1 & 0\end{pmatrix} = \begin{pmatrix} 1 & 0 \\ -11 & 1\end{pmatrix} = \begin{pmatrix} 1 & 0 \\ 11 & 1\end{pmatrix}^{-1} \in G  \]
is a parabolic element fixing $0$,
\begin{align*} \begin{pmatrix} 1 & 0 \\ 3 & 1\end{pmatrix} \begin{pmatrix} 1 & 11 \\ 0 & 1\end{pmatrix} \begin{pmatrix} -1 & 0 \\ 3 & -1\end{pmatrix} &= \begin{pmatrix} -32 & 11 \\ -99 & 34\end{pmatrix} \\
&= \begin{pmatrix} 10 & 3 \\ 33 & 10\end{pmatrix} \begin{pmatrix} -23 & 8 \\ 66 & -23\end{pmatrix} \in G
\end{align*}
is a parabolic element fixing $\frac{1}{3}$, and
\begin{align*} \begin{pmatrix} 3 & 11 \\ 1 & 4\end{pmatrix} \begin{pmatrix} 1 & 1 \\ 0 & 1\end{pmatrix} \begin{pmatrix} -4 & 1 \\ 11 & -3\end{pmatrix} &= \begin{pmatrix} -32 & 9 \\ -121 & 34\end{pmatrix} \\
&= \begin{pmatrix}2 & -1 \\ 11 & -5\end{pmatrix} \begin{pmatrix} 23 & 8 \\ 66 & 23\end{pmatrix} \begin{pmatrix}-5 & -1 \\ 11 & -2\end{pmatrix}\begin{pmatrix} -10 & 3 \\ 33 & -10\end{pmatrix} \in G
\end{align*}
is a parabolic element fixing $\frac{3}{11}$. Note that in this last case, $G$ also contains a conjugate of $T^{11}$ fixing $\frac{3}{11}$, by taking the $11$th power of the given element. Thus there exists a conjugate $\alpha$ of $T^{11}$ such that $\alpha \in G$ and $\alpha$ fixes $g^{-1} \varphi(\infty)$. The element $g.\alpha.g^{-1} \in G$ is therefore a parabolic element, conjugate in $\mathrm{PSL}_2(\bZ)$ to $T^{11}$, with parabolic fixed point at $\varphi(\infty)$. We wish to show that $g . \alpha . g^{-1} = \varphi \, T^{11} \varphi^{-1}$. Since the former element is known to be a conjugate of $T^{11}$, we may alternatively write it as $\psi \, T^{11} \psi^{-1}$ for some $\psi \in \mathrm{PSL}_2(\bZ)$ with $\psi(\infty) = \varphi(\infty)$. Now $\psi^{-1} \varphi \in \mathrm{PSL}_2(\bZ)$ fixes $\infty$ and so must be a power of $T$; in particular, $\psi^{-1} \varphi$ commutes with $T$. It follows that
\[ \psi^{-1} \varphi \, T^{11} \varphi^{-1} \psi = T^{11} \]
and therefore
\[ g.\alpha.g^{-1} = \psi \, T^{11} \psi^{-1} = \varphi \, T^{11} \varphi^{-1} \]
as required. Thus $G$ contains all elements of the form $\varphi \, T^{11} \varphi^{-1}$, and so the level of $G$ is at most $11$. To see that it is not smaller, observe that $G$ does not contain any element of the form
\[ \begin{pmatrix}1 & 0 \\ t & 1\end{pmatrix}\]
for $t = 1, 2, \ldots, 10$. \end{proof}

To complete the proof of Claim $3$, we note that by the Subclaim, $G$ must contain $\Gamma(11)$. Computation in Gap \cite{Gap4} reveals that the core of $G$ in $\mathrm{PSL}_2(\bZ)$, the largest normal subgroup of $\mathrm{PSL}_2(\bZ)$ contained in $G$, has index $k= 1351680=2^{13} \cdot 3 \cdot 5 \cdot 11$ in $\mathrm{PSL}_2(\bZ)$. Thus $G$ cannot contain a normal subgroup of $\mathrm{PSL}_2(\bZ)$ of index (in $\mathrm{PSL}_2(\bZ)$) smaller than this constant. But all principal congruence subgroups are normal, and $\lbrack \mathrm{PSL}_2(\bZ) : \Gamma(11) \rbrack = 660 < k$. From this contradiction we conclude that $\Gamma_{\mathrm{ref}}$ is not congruence.\end{proof}

\textbf{Remark.} Hsu \cite{Hsu} gives a congruence test which can be applied to $G$. Since $G$ has index $24$ in $\mathrm{PSL}_2(\bZ)$, we obtain a representation in the symmetric group $S_{24}$. After expressing the known generators for $G$ in terms of $L= \begin{pmatrix}1 & 1\\0 & 1\end{pmatrix}$ and $R=\begin{pmatrix}1 & 0 \\ 1 & 1\end{pmatrix}$, we find
\[ L=(2 \ 4 \ 9 \ 15 \ 8 \ 5 \ 11 \ 13 \ 7 \ 3 \ 6)(10 \ 17 \ 21 \ 23 \ 22 \ 19 \ 14 \ 12 \ 18 \ 20 \ 16) \]
and
\[ R = (1 \ 2 \ 5 \ 12 \ 14 \ 7 \ 4 \ 10 \ 16 \ 8 \ 3)(9 \ 17 \ 19 \ 13 \ 11 \ 18 \ 21 \ 24 \ 22 \ 20 \ 15) \]
are both of order $11$, also giving that the level of $G$ is $11$. Hsu's test is then that $G$ is congruence if and only if $(R^2 L^{-\frac{1}{2}})^3 = 1$, where $\frac{1}{2}$ is the multiplicative inverse of $2$ mod $11$, in this case equal to $6$. We find that $R^2 L^{-6}$ has order $6$, and so $G$ is non-congruence.\vspace{2mm}

\section{Kleinian Groups and DF Domains}\label{kleinian}

In this section, it will be shown that only one direction (the analogue of \ref{5.4}) of the main theorem holds when we consider Kleinian groups in the place of Fuchsian groups. This is because the added dimension gives new possibilities for the shape of the domains in question; in particular, they no longer have to glue up in a completely symmetrical way, although some symmetry remains. Examples will be given to demonstrate this flexibility, which extends as far as having non-trivial cuspidal cohomology. The discussion will be restricted to DF domains; as the above work demonstrates, it is not unreasonable to suppose that Double Dirichlet domains share many similar properties.

\begin{thm}\label{6.1}Let $Q \subset \mathbb{H}^3$ be a finite-sided, convex hyperbolic polyhedron satisfying the hypotheses of Theorem \ref{5.2}, and let $G$ be the discrete group of reflections in $Q$. Then G contains an index $2$ Kleinian subgroup which admits a Double Dirichlet domain (and a DF domain if Q has an ideal vertex).
\end{thm}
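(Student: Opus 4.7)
The plan is to mimic the proof of Theorem \ref{5.4} in one higher dimension, replacing sides by faces, isometric circles by isometric spheres, and vertical lines of symmetry by vertical hyperbolic planes of symmetry. First I would conjugate $G$ in $\mathrm{PSL}_2(\mathbb{C})$ so that one of the faces of $Q$, call it $L$, lies in a vertical hyperbolic plane, and (in the DF case) so that one ideal vertex of $Q$ is at $\infty$. Let $\sigma_L$ denote reflection in $L$, let $\sigma_1,\ldots,\sigma_m$ denote reflections in the remaining non-vertical faces $S_1,\ldots,S_m$ of $Q$, and (if $Q$ has an ideal vertex at $\infty$) let $\sigma_{K_1},\ldots,\sigma_{K_\ell}$ denote reflections in the remaining vertical faces $K_1,\ldots,K_\ell$ of $Q$. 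These generate $G$ by hypothesis. Define $\Gamma < G$ to be the orientation-preserving index $2$ subgroup, generated by all length-two products $\sigma_a\circ\sigma_b$, and set $P := Q\cup\sigma_L(Q)$.

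Next I would verify that $P$ is a fundamental domain for $\Gamma$ whose face-pairings are exactly $\sigma_L\circ\sigma_i$ (pairing $S_i$ with $T_i:=\sigma_L(S_i)$) and $\sigma_L\circ\sigma_{K_j}$ (pairing $K_j$ with $M_j:=\sigma_L(K_j)$). Since $\sigma_L\notin\Gamma$, $P$ is contained in a fundamental domain for $\Gamma$; and the identity
\[ \sigma_b\circ\sigma_a \;=\; (\sigma_L\circ\sigma_b)^{-1}\circ(\sigma_L\circ\sigma_a) \]
shows these face-pairings generate $\Gamma$, so by the Poincar\'{e} Polyhedron Theorem $P$ is itself a fundamental domain and $[G:\Gamma]=2$.

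Now pick any $z_0$ in $L\cap\textit{\r{P}}$. For each non-vertical face $S_i$, the hyperbolic plane $S_i$ is the perpendicular bisector of $z_0$ and $\sigma_i(z_0)$, so its reflection $T_i=\sigma_L(S_i)$ is the perpendicular bisector of $\sigma_L(z_0)=z_0$ and $\sigma_L(\sigma_i(z_0))=(\sigma_L\circ\sigma_i)(z_0)$, which is precisely the type of bisector plane appearing in the definition of the Dirichlet domain centered at $z_0$; a symmetric argument gives the same conclusion for $S_i$ itself, and for $K_j$, $M_j$ in the ideal-vertex case. Hence $P$ contains the Dirichlet domain at $z_0$, and since $P$ is already a fundamental domain, the two coincide. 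Varying $z_0$ along $L\cap\textit{\r{P}}$ shows $P$ is Dirichlet for an entire line of centers, so $P$ is a Double Dirichlet domain.

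For the DF claim, assume $Q$ has an ideal vertex at $\infty$. Here the new wrinkle absent in dimension two is checking that the parabolic stabilizer $\Gamma_\infty$ really contains a $\mathbb{Z}^2$, as required by the Kleinian definition of Ford domain in Section \ref{prelims}. The faces of $Q$ incident to $\infty$ meet the horosphere at $\infty$ in a Euclidean polygon all of whose angles are submultiples of $\pi$; the corresponding Euclidean reflection group is therefore a discrete cocompact reflection group on $\mathbb{R}^2$, and its orientation-preserving index $2$ subgroup is a wallpaper group containing a translation lattice $\mathbb{Z}^2$, which sits inside $\Gamma_\infty$. Given this, each non-vertical face $S_i$ is precisely the isometric sphere of $\sigma_L\circ\sigma_i$, with $T_i$ the isometric sphere of its inverse, and the vertical faces of $P$ bound a fundamental domain for $\Gamma_\infty$. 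Hence $P$ contains a Ford domain for $\Gamma$, and equality follows because $P$ is already a fundamental domain. The main obstacle I anticipate is exactly this Ford step in 3D: one must ensure that the vertical strip bounded by the $K_j$ and $M_j$ really is a fundamental domain for $\Gamma_\infty$ rather than a proper multiple of one, which follows from Theorem \ref{5.1} applied to the Euclidean link at $\infty$ together with the Poincar\'{e} theorem above.
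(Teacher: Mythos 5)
Your proposal is correct and follows essentially the same route as the paper: form $P = Q \cup \sigma_L Q$, show the products $\sigma_L\circ\sigma_i$ pair its faces and generate the index $2$ orientation-preserving subgroup, and verify that each face is a bisector plane for any center on $L\cap\textit{\r{P}}$ (and an isometric sphere in the ideal-vertex case). In fact you supply several details the paper's very terse proof omits --- notably the verification that $P$ is a fundamental domain, the Ford-domain step, and the check that $\Gamma_\infty$ contains a $\mathbb{Z}^2$ via the compact Euclidean link at $\infty$ --- all of which are sound.
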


\begin{proof}Suppose that $Q$ is placed in upper half-space $\mathbb{H}^3$ such that one of its faces $L$ is contained in a vertical plane. Let
$$G = < \tau_1, \ldots, \tau_m, \tau_L >$$
be a generating set for $G$. Let
$$\Gamma = < \tau_L \circ \tau_1, \ldots, \tau_L \circ \tau_m >$$
be the index $2$ subgroup. Let $P = Q \cup \tau_L Q$. Let $w_0 = x_0 + y_0 i + z_0 j \in \textit{\r{L}}$, for $z_0 > 0$. The claim is that $w_0$ is a Dirichlet center for $\Gamma$. Fix a generator $\gamma_i = \tau_L \circ \tau_i$. Then the plane $P_i$ fixed by $\tau_i$ bisects $w_0$ and $\tau_i(w_0)$, and so $\tau_L(P_i)$, which by construction is a face of $P$, bisects $w_0$ and $\gamma_i(w_0)$.
\end{proof}

The next result provides the first counterexamples of Theorem \ref{5.3} by exhibiting Kleinian groups which admit DF domains and do not have index $2$ in a reflection group.

\begin{prop}Let $Q$ be an all-right hyperbolic polyhedron, with a vertex at $\infty$, and all vertices ideal. Let $G$ be the group of reflections in $Q$. Then $G$ contains a subgroup of index $4$ which admits a DF domain.
\end{prop}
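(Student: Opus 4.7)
The plan is to mimic the proof of Theorem~\ref{6.1}, but to reflect $Q$ in two mutually perpendicular vertical faces rather than a single one, producing a fundamental domain that is a $2\times 2$ block of copies of $Q$. Since $Q$ is all-right and $\infty$ is an ideal vertex, the horospherical link of $\infty$ is a Euclidean polygon every interior angle of which is $\pi/2$, and hence must be a rectangle; thus exactly four vertical faces of $Q$ meet at $\infty$. After a Euclidean isometry of $\partial\mathbb{H}^3$, I place these faces in the planes $\{x=0\}$, $\{x=a\}$, $\{y=0\}$, $\{y=b\}$, and write $\sigma_1,\sigma_2,\sigma_3,\sigma_4$ for the corresponding reflections and $\sigma_F$ for reflection in any non-vertical face $F$ of $Q$. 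The candidate fundamental domain is $P:=Q\cup\sigma_1 Q\cup\sigma_3 Q\cup\sigma_1\sigma_3 Q$.

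I construct the index-$4$ subgroup $H$ as the kernel of a homomorphism $\phi\colon G\to(\mathbb{Z}/2)^2$. Because every dihedral angle of $Q$ is $\pi/2$, the only relations in $G$ are $\sigma^2=1$ and commutation of adjacent generators, both of which are automatic in an elementary abelian $2$-group, so any assignment on the reflection generators extends to a homomorphism. Set $\phi(\sigma_1)=\phi(\sigma_2)=(1,0)$, $\phi(\sigma_3)=\phi(\sigma_4)=(0,1)$, and for each non-vertical $F$ choose $\phi(\sigma_F)\in\{(1,0),(0,1)\}$ according to whether the natural side-pairing of $F$ in $P$ should cross the mirror $\{x=0\}$ or $\{y=0\}$. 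Composing $\phi$ with $(a,b)\mapsto a+b$ recovers the orientation character of $G$, so $H:=\ker\phi$ lies in the orientation-preserving subgroup and has index $4$ in $G$. The set $\{1,\sigma_1,\sigma_3,\sigma_1\sigma_3\}$ represents the four cosets, so $P$ is a fundamental domain for $H$, with horizontal cross-section the $(2a)\times(2b)$ rectangle and $H_\infty=\langle\sigma_2\sigma_1,\sigma_4\sigma_3\rangle\cong\mathbb{Z}^2$.

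To verify that $P$ is a DF domain, I check both conditions. For Ford: the vertical faces $\{x=\pm a\},\{y=\pm b\}$ of $P$ are paired by $\sigma_2\sigma_1$ and $\sigma_4\sigma_3$; when $\phi(\sigma_F)=(1,0)$, a short matrix computation shows that the isometric sphere of $\sigma_1\sigma_F\in H$ is precisely the hemisphere $F$, and this element pairs $F$ with $\sigma_1 F$, while the conjugate $\sigma_1\sigma_3\sigma_F\sigma_3\in H$ pairs $\sigma_3 F$ with $\sigma_1\sigma_3 F$ (the case $\phi(\sigma_F)=(0,1)$ is symmetric, using $\sigma_3\sigma_F$ in place of $\sigma_1\sigma_F$). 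For Dirichlet: take $w_0=(0,0,z_0)\in\textit{\r{P}}$ on the vertical axis $\{x=y=0\}$. Since $\sigma_1$ and $\sigma_3$ both fix $w_0$, one has $(\sigma_1\sigma_F)^{-1}(w_0)=\sigma_F(w_0)$, and the perpendicular bisector of $w_0$ and $\sigma_F(w_0)$ is precisely the plane $F$; analogous bisectors account for every other face of $P$, so $P$ is the Dirichlet domain centered at $w_0$.

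The main obstacle is the consistency of the assignment $\phi(\sigma_F)$: if a single non-vertical face $F$ is adjacent to both internal mirrors $\{x=0\}$ and $\{y=0\}$, the two candidate self-pairings of the fused face $F\cup\sigma_1 F\cup\sigma_3 F\cup\sigma_1\sigma_3 F$ in $P$ would require contradictory values of $\phi(\sigma_F)$. The rectangular-link condition at each ideal vertex, however, forces any such double-adjacency to occur along disjoint edges of $F$ (since at a vertex where $\{x=0\}$ and $\{y=0\}$ meet they are adjacent sides of the link, so the two non-vertical faces at that vertex are opposite in the link and each is adjacent to only one of the two mirrors); this residual case is then handled by rotating coordinates so that no non-vertical face of $Q$ is adjacent to both selected perpendicular mirrors, after which a consistent $\phi$ exists and the construction goes through as above.
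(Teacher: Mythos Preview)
Your construction is essentially the same as the paper's: form $P$ as a $2\times 2$ block $Q\cup\sigma_1 Q\cup\sigma_3 Q\cup\sigma_1\sigma_3 Q$ by reflecting in two perpendicular vertical faces, and pair each non-vertical face across one of the two internal mirrors. The paper carries this out by a checkerboard $A/B$ labeling of the non-vertical faces of $P$ (so adjacent faces get opposite labels), sending $A$-faces across one mirror and $B$-faces across the other; your version packages the same choice as an assignment $\phi(\sigma_F)\in\{(1,0),(0,1)\}$ and realises the index-$4$ subgroup as $\ker\phi$. The homomorphism framing is a genuine addition---it gives a clean reason why the index is exactly $4$ and why $H$ is orientation-preserving---whereas the paper simply declares the generators and asserts $P$ is a DF domain.

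The weak point is your final paragraph. The ``obstacle'' you raise is not actually an obstruction: because all relations in $G$ hold automatically in $(\mathbb{Z}/2)^2$, \emph{any} choice of $\phi(\sigma_F)\in\{(1,0),(0,1)\}$ defines a homomorphism, and one can check directly that the resulting side-pairings still satisfy the Ford and Dirichlet conditions even when a face $F$ meets one or both internal mirrors (the pairing element is then order $2$ and maps the fused face to itself, which is permitted). So no ``rotation of coordinates'' is needed, and that last sentence is both vague and unnecessary. The paper sidesteps the whole issue by using the checkerboard labeling, whose existence follows from the fact that around every ideal vertex exactly four faces meet; this is the cleaner device, and you could simply adopt it in place of your last paragraph.
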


\begin{proof}Since $Q$ is all-right, the link of each vertex is a rectangle. Rotate $Q$ in $\mathbb{H}^3$ so that the four vertical sides, which meet at the vertex at $\infty$, each lie above vertical or horizontal lines in $\mathbb{C}$. Let $H$ be a vertical side, $V$ a horizontal side, and $\tau_H$ and $\tau_V$ the respective reflections. Let $P = (Q \cup \tau_H Q) \cup \tau_V(Q \cup \tau_H Q)$. Then $P$ is the union of $4$ copies of $Q$. Looking down from $\infty$ on the floor of $P$, label by $A$ the non-vertical face adjacent to the top-left vertex and to the vertical face opposite $H$. Label any non-vertical faces adjacent to this face $B$. Proceed to label every non-vertical face $A$ or $B$, with no two adjacent faces sharing the same label. The symmetry of $P$ implies that this labeling is symmetric in both horizontal and vertical directions. Define the subgroup $\Gamma$ as follows. Given a non-vertical side $P_i$ of $P$, if $P_i$ has label $A$, let the element $\tau_H \circ \tau_i$ belong to $\Gamma$; if $P_i$ has label $B$, let $\tau_V \circ \tau_i$ belong to $\Gamma$. If $H'$ is the face opposite $H$, and $V'$ opposite $V$, let $\tau_H \circ \tau_{H'}$ and $\tau_V \circ \tau_{V'}$ belong to $\Gamma$. Then $P$ is a DF domain for $\Gamma$.
\end{proof}

{\bf Remark.} Given a group $\Gamma$ constructed as in the above proof, note that $\Gamma$ is not an index $2$ subgroup of the group of reflections in the polyhedron $(Q \cup \tau_H Q)$. This is because the reflection $\tau_H$ will be absent from this group, preventing the construction of elements of $\Gamma$ of the form $\tau_H \circ \tau_i$. The same is valid for the group of reflections in the polyhedron $(Q \cup \tau_V Q)$. \vspace{2 mm}\\
Since there is no direct analogue of Theorem \ref{5.3} for Kleinian groups, the question arises as to what, if anything, is implied about a Kleinian group by it having a DF domain. For example, one might ask whether such groups must have trivial cuspidal cohomology. The following example gives a Kleinian group which admits a DF domain, but which has non-trivial cuspidal cohomology; that is, there exists a non-peripheral homology class of infinite order in the first homology of the quotient space.\vspace{2 mm}\\
{\bf Example.} Let $\Gamma < \mathrm{PSL}_2(\mathbb{C})$ be generated by the matrices
$$\begin{pmatrix}1 & 5 \\ 0 & 1\end{pmatrix}, \begin{pmatrix}1 & 5i \\ 0 & 1\end{pmatrix}, \begin{pmatrix}0 & \frac{-1}{\sqrt{2}} \\ \sqrt{2} & 0\end{pmatrix}, \begin{pmatrix}-\sqrt{2} & \frac{i}{\sqrt{2}} \\ -i\sqrt{2} & -\sqrt{2}\end{pmatrix},$$
and
$$\begin{pmatrix}1 & a \\ 0 & 1\end{pmatrix}\begin{pmatrix}0 & \frac{-1}{\sqrt{2}} \\ \sqrt{2} & 0\end{pmatrix}\begin{pmatrix}1 & \bar{a} \\ 0 & 1\end{pmatrix} = \begin{pmatrix}\sqrt{2}a & \sqrt{2}a\bar{a}-\frac{1}{\sqrt{2}} \\ \sqrt{2} & \sqrt{2}\bar{a}\end{pmatrix}$$
for each $a \in \lbrace 1, 2, 1+i, 2+i, 2i, 1+2i, 2+2i, 1-i, 2-i, -2i, 1-2i, 2-2i \rbrace$, where $\bar{a}$ is the complex conjugate of $a$. Then the isometric spheres of these matrices have centers at the Gaussian integers $\lbrace x+iy \ | \ x, y \in \mathbb{Z} \rbrace$ and radius $\frac{1}{\sqrt{2}}$. The square with vertices at $\pm \frac{5}{2} \pm \frac{5}{2}i$ is a Dirichlet domain for the action of $\Gamma_\infty$. Let $P$ be the intersection of the exterior of all these isometric spheres with the chimney above the given rectangle. Then $P$ is a DF domain for $\Gamma$, with Dirichlet center any point of $\textit{\r{P}}$ above $0$. Every dihedral angle of $P$ is $\frac{\pi}{2}$. The quotient space $\mathbb{H}^3 / \Gamma$ has $14$ boundary components; the cusp at $\infty$ gives a boundary torus, and each of the $13$ cusp cycles in $\mathbb{C}$ gives a $(2,2,2,2)$ or a $(2,4,4)$ sphere. Thus the peripheral homology has rank $1$. Computation using Gap \cite{Gap4} gives that $H_1(\mathbb{H}^3 / \Gamma)$ has rank $2$, so there is infinite non-peripheral homology.\vspace{2 mm}\\
{\bf Remarks. (1)} The cuspidal cohomology of this example has rank $1$, but it can be modified to give examples where this rank is arbitrarily high. 

{\bf (2)} This example is arithmetic. To see this, consider the Picard group $\mathrm{PSL}_2(\mathcal{O}_1)$. This group contains as a finite index subgroup the congruence subgroup $\Gamma_0(2)$, where the lower left entry is a member of the ideal in $\mathcal{O}_1$ generated by $2$. This congruence subgroup is normalized by the element
\[\gamma = \begin{pmatrix}0 & \frac{-1}{\sqrt{2}} \\ \sqrt{2} & 0\end{pmatrix}\]
and, since $\gamma$ has order $2$, $\Gamma_0(2)$ is an index $2$ subgroup of the group $H$ obtained by adding $\gamma$. The group $\Gamma$ given in the example is a subgroup of $H$, of finite index due to the finite volume of the DF domain. In turn, $H$ is commensurable with $\mathrm{PSL}_2(\mathcal{O}_1)$, as both share $\Gamma_0(2)$ as a subgroup of finite index.

{\bf (3)} The quotient space of $\mathbb{H}^3$ by this group is not a manifold, so one can thus ask whether there exists another example which has non-trivial cuspidal cohomology, and which is additionally torsion-free.\vspace{2 mm}\\
Although there does not appear to be a specific condition for a Kleinian group which is equivalent to having a DF domain, we can say something about a group which admits a DF domain. We cannot always decompose an orientation-preserving isometry of $\mathbb{H}^3$ into the composition of two reflections, but Carath\'{e}odory \cite{Car} shows that we need at most four. If $\gamma \notin \Gamma_\infty$, these can be taken to be $\gamma = \gamma_4 \circ \gamma_3 \circ \gamma_2 \circ \gamma_1$, where $\gamma_1$ is reflection in the isometric sphere $S_\gamma$, $\gamma_2$ in the vertical plane $R_\gamma$ bisecting $S_\gamma$ and $S_{\gamma^{-1}}$, and $\gamma_4 \circ \gamma_3$ is rotation around the vertical axis through the North pole of $S_{\gamma^{-1}}$.

\begin{thm}\label{dim3thm}Suppose the Kleinian group $\Gamma$ admits a DF domain $P$. Then the planes $R_\gamma$, for side-pairings $\gamma \in \Gamma \setminus \Gamma_\infty$ of $P$, all intersect in a vertical axis. Furthermore, for each such $\gamma$, $\gamma_4 \circ \gamma_3 = 1$,  and so each element of the corresponding generating set for $\Gamma$ has real trace.
\end{thm}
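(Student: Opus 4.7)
The plan is to adapt the proof of Theorem \ref{PS} to $\mathbb{H}^3$, with the extra rotational degree of freedom $\gamma_4 \circ \gamma_3$ as the unknown to eliminate. Write the Dirichlet centre of $P$ as $w_0 = x_0 + y_0 i + z_0 j$ and set $\xi := x_0 + y_0 i \in \mathbb{C}$ for its horizontal projection. Fix a side-pairing $\gamma \in \Gamma \setminus \Gamma_\infty$ whose isometric spheres $S_\gamma$, $S_{\gamma^{-1}}$ have common radius $r$ and centres $p_0, p_0' \in \mathbb{C}$. The factor $\gamma_1$ fixes $S_\gamma$ pointwise, $\gamma_2$ (reflection in the vertical plane $R_\gamma$) preserves all heights, and $\gamma_4 \circ \gamma_3$ (rotation about the vertical line through $p_0'$) preserves all heights; hence $\gamma$ preserves the height of each point of $S_\gamma$. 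Writing a typical point of $S_\gamma$ as $w = p_0 + u + tj$, with $u \in \mathbb{C}$ and $|u|^2 + t^2 = r^2$, one computes
\[ \gamma(w) \;=\; p_0' + R_\theta\,\bar{\sigma}(u) + tj, \]
where $\bar{\sigma}$ is the linear part at $\xi$ of reflection in $R_\gamma$ and $R_\theta$ is the rotation $\gamma_4 \circ \gamma_3$ viewed as acting on $\mathbb{C}$ about $p_0'$.

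Because $P$ is a Dirichlet domain centred at $w_0$, the $\mathbb{H}^3$ analogue of Lemma \ref{Lemma2} gives $d_{\mathbb{H}^3}(w, w_0) = d_{\mathbb{H}^3}(\gamma(w), w_0)$ for every $w \in \partial P \cap S_\gamma$. Combined with the height equality, this reduces to the Euclidean equation
\[ |p_0 + u - \xi|^2 \;=\; |p_0' + R_\theta\,\bar{\sigma}(u) - \xi|^2, \]
which must hold for all $u$ in an open subset of $\mathbb{C}$ (namely the horizontal projection of the face of $P$ on $S_\gamma$); since both sides are polynomials in the coordinates of $u$, it then holds for every $u \in \mathbb{C}$. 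Cancelling $|u|^2$ and separating the resulting identity into its $u$-constant and $u$-linear parts yields (i) $|p_0 - \xi| = |p_0' - \xi|$, that is, $\xi \in R_\gamma$, and (ii) $R_\theta\,\bar{\sigma}(p_0 - \xi) = p_0' - \xi$. Since (i) holds for every side-pairing, every $R_\gamma$ contains the vertical axis through $\xi$, which gives the first assertion of the theorem. Using (i), $\bar{\sigma}(p_0 - \xi) = p_0' - \xi$, so (ii) collapses to $R_\theta(p_0' - \xi) = p_0' - \xi$, forcing $\theta = 0$ unless $p_0' = \xi$. A short matrix calculation shows that $S_\gamma = S_{\gamma^{-1}}$ precisely when $\operatorname{tr}(\gamma) = 0$; hence for non-involutive $\gamma$ we have $p_0 \neq p_0'$, and since $\xi \in R_\gamma$ but $p_0'$ is not on the perpendicular bisector of $\overline{p_0 p_0'}$, we conclude $p_0' \neq \xi$ and so $\gamma_4 \circ \gamma_3 = 1$. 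Involutions have $\operatorname{tr}(\gamma) = 0$, which is already real.

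With the rotational factor trivial, $\gamma = \gamma_2 \circ \gamma_1$ is a composition of just two reflections: one in the sphere $S_\gamma$ centred at $p_0$, one in the vertical plane $R_\gamma$ perpendicularly bisecting $\overline{p_0 p_0'}$. Both reflections preserve the vertical totally geodesic plane $\Pi$ through $p_0$ and $p_0'$, so $\gamma$ restricts to an orientation-preserving isometry of $\Pi \cong \mathbb{H}^2$; conjugating in $\mathrm{PSL}_2(\mathbb{C})$ so as to move $\Pi$ to the standard upper half-plane places $\gamma$ in $\mathrm{PSL}_2(\mathbb{R})$, whence $\operatorname{tr}(\gamma) \in \mathbb{R}$. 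The chief technical hurdle is establishing the three-dimensional height-preservation from the Ford hypothesis and setting up the Euclidean identity cleanly enough that it decouples into independent constant and linear equations in $u$; once this is in place, the linear algebra and the recognition of the preserved plane $\Pi$ are routine.
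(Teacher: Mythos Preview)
Your argument is correct and follows essentially the same strategy as the paper's: combine the Ford condition (height preservation on $S_\gamma$) with the Dirichlet equidistance of Lemma~\ref{Lemma2} to force first $\xi \in R_\gamma$ and then $\gamma_4 \circ \gamma_3 = 1$. The paper does this more tersely and geometrically---using the North pole of $S_\gamma$ to pin down $w_0 \in R_\gamma$ and then asserting that ``one can find a point $w$'' witnessing the failure of equidistance when the rotation is nontrivial---whereas you make the same two conclusions fall out as the constant and linear parts of a single polynomial identity; the content is the same, and your handling of the involution case matches the paper's level of detail.
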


\begin{proof}Let $P$ be a Ford domain. Suppose there is some side-pairing $\gamma$ such that $\gamma_4 \circ \gamma_3 \neq 1$. By considering the North pole of $S_\gamma$ and its image, the North pole of $S_{\gamma^{-1}}$, we see that if $P$ were a Dirichlet domain, its center $w_0$ would have to be in the plane $R_\gamma$. But given any such choice of $w_0$, one can find a point $w \in P \cap S_\gamma$ such that $d(w_0,w) \neq d(w_0,\gamma(w))$. Thus $P$ is not a Dirichlet domain. Since each $\gamma \in \Gamma \setminus \Gamma_\infty$ is then simply the composition of two reflections, it is the conjugate in $\mathrm{PSL}_2(\mathbb{C})$ of an element of $\mathrm{PSL}_2(\mathbb{R})$. It thus has real trace. Since it is assumed that any element of $\Gamma_\infty$ is parabolic, these too have real trace.\vspace{2 mm}\\
Next suppose that the planes $R_\gamma$ do not have a common intersection. Since we know that $\gamma_4 \circ \gamma_3 = 1$, for a given $\gamma$, the plane $R_\gamma$ represents the set of potential Dirichlet centers. If there is no common such center, $P$ is not a Dirichlet domain. Thus if $P$ is a DF domain, the planes $R_\gamma$ have a common intersection.
\end{proof}

The examples given earlier in this section give a flavor of the particular case with only two distinct, perpendicular planes $R_\gamma$. It is therefore possible for DF domains to be more complicated than this. This theorem provides a useful criterion for having a DF domain, which can be used to check known Ford domains. Observe that the vertical axis of intersection of the planes $R_\gamma$ must correspond to a Dirichlet center for the action of $\Gamma_\infty$. Thus we see that the figure-$8$ knot group \cite{Riley}, as well as the Whitehead link group and the group of the Borromean rings \cite{Wiel1} do not admit DF domains. Furthermore, the groups obtained from a standard Ford domain in \cite{Wiel2} cannot admit DF domains. Although in some cases, with the right choice of Ford domain, one can generate congruence subgroups of Bianchi groups using elements of real trace, the sides of the domain are identified in a way similar to the corresponding Fuchsian congruence subgroup, and so these groups seldom admit a DF domain.

\bibliographystyle{amsplain}
\bibliography{dfdomainrefs}
Department of Mathematics,\\
University of Texas\\
Austin, TX 78712, USA.\\
Email: glakeland@math.utexas.edu

\end{document}